\def\og{\leavevmode\raise.3ex\hbox{$\scriptscriptstyle\langle\!\langle$~}}
\def\fg{\leavevmode\raise.3ex\hbox{~$\!\scriptscriptstyle\,\rangle\!\rangle$}}
\newcommand{\beqa}{\begin{eqnarray}}
\newcommand{\eeqa}[1]{\label{#1}\end{eqnarray}}
\newcommand{\beq}{\begin{equation}}
\newcommand{\eeq}[1]{\label{#1}\end{equation}}
\newcommand\ep{\varepsilon}
\def\demifleche{\rightharpoonup}
\def\*fleche{\buildrel *\over\demifleche}
\def\tol2{\buildrel\hbox{$L^2$}\over\longrightarrow}
\def\toto{\leaders\hbox to 5mm{\hfil.\hfil}\hfill}
\newtheorem{theorem}{\bf Theorem}[section]
\newtheorem{corollary}[theorem]{\bf Corollary}
\newtheorem{lemma}[theorem]{\bf Lemma}
\newtheorem{proposition}[theorem]{\bf Proposition}
\newtheorem{definition}{\bf Definition}
\newtheorem{assumption}[theorem]{Assumption}
\begin{document}

\title{\sc Operator-norm convergence estimates for elliptic homogenisation problems on periodic singular structures}

\author[1]{Kirill Cherednichenko}
\author[1]{Serena D'Onofrio}
\affil[1]{Department of Mathematical Sciences, University of Bath, Claverton Down, Bath, BA2 7AY, United Kingdom}

\maketitle
\vspace{-6mm}
\begin{center}
  \textsl{To the fond memory of Vasily Vasil'evich Zhikov}
 \vspace{4mm} 
\end{center}

\begin{abstract}
 
For a an arbitrary periodic Borel measure $\mu$, we prove order $O(\varepsilon)$ operator-norm resolvent estimates for the solutions to scalar elliptic problems in $L^2({\mathbb R}^d, d\mu^\varepsilon)$ with $\varepsilon$-periodic coefficients, $\varepsilon>0.$  Here $\mu^\varepsilon$ is 
the measure obtained by $\varepsilon$-scaling of $\mu.$ Our analysis includes both the case of a measure
absolutely continuous with respect to the standard Lebesgue measure and the case of ``singular" periodic structures (or ``multisctructures''), when $\mu$ is supported by lower-dimensional manifolds.

 \vskip 0.4cm

{\bf Keywords} Homogenisation $\cdot$  Effective properties $\cdot$ Norm-resolvent estimates $\cdot$ Singular structures

\end{abstract}

\section{Introduction}
\label{intro}

The goal of the present work is to prove order-sharp norm-resolvent convergence estimates for partial differential operators with periodic rapidly oscillating coefficients for a  wide class of underlying periodic measures. The results on norm-resolvent convergence in homogenisation for the ``classical" problem concerning the case of Lebesgue measure go back to the works \cite{Sevost'yanova}, \cite{Zhikov1989}, where the asymptotic analysis of the Green functions of the corresponding problems is carried out, which were followed by the operator-theoretic approach of \cite{BS}. An alternative approach, based on the uniform power-series asymptotic analysis of the fibre operators in the associated direct integral, was recently developed in  \cite{ChCoARMA}. In the present work we adopt the overall strategy of the latter work, in the setting of an arbitrary periodic Borel measure.  As was pointed out in \cite{Zhikov2000}, and subsequently discussed in more detail in \cite{Zhikov_Note}, \cite{Zhikov_Pastukhova_Bloch}, the analysis of related elliptic problems requires a careful description of the property of (weak) differentiability 
of functions square integrable with respect to a general Borel measure. In what follows we briefly introduce 
the tools we employ, namely the Sobolev spaces of quasiperiodic functions with respect to an arbitrary Borel measure (Section \ref{quasiperiodic}) and the Floquet transform (Section \ref{Floquet}). In Section \ref{main_result} we formulate and prove our main result (Theorem \ref{main_theorem}). 
Throughout the paper, for vectors $a, b\in{\mathbb C}^3$ we denote by $a\cdot b$ their standard (sesquilinear) Euclidean inner product, and define all function spaces over the field ${\mathbb C}.$

Consider a $Q$-periodic, $Q:=[0,1)^d,$ Borel measure $\mu,$ in ${\mathbb R}^d$ such that $\mu(Q)=1,$ and for each $\varepsilon>0$ define an $\varepsilon$-periodic measure $\mu^\varepsilon$ by the formula $\mu^\varepsilon(B)=\varepsilon^d\mu(\varepsilon^{-1}B)$ for all Borel sets $B\subset{\mathbb R}^d,$ $d\in{\mathbb N}.$ 
In the present work we study the asymptotic behaviour, as $\varepsilon\to0,$ of the solutions $u=u^\varepsilon$ to the problems 
\begin{equation}
-\nabla\cdot A(\cdot/\varepsilon)\nabla u+u=f,\qquad f\in L^2({\mathbb R}^d,d\mu^\varepsilon),\qquad \varepsilon>0,
\label{whole_space_eq}
\end{equation}
where $A$ is a positive bounded $Q$-periodic $\mu$-measurable real-valued matrix function. We aim to prove operators-norm estimates between $u^\varepsilon$ and the solution to the homogenised equation 
\begin{equation}
-\nabla\cdot A^{\rm hom}\nabla u^0+u^0=f,\qquad f\in L^2({\mathbb R}^d,d\mu^\varepsilon),
\label{u0_eq}
\end{equation}
with a constant matrix $A^{\rm hom},$ {\it i.e.} uniform estimates of the form
\[
\Vert u-u^0\Vert_{L^2({\mathbb R}^d, d\mu^\varepsilon)}\le C\varepsilon\Vert f\Vert_{L^2({\mathbb R}^d,d\mu^\varepsilon)},
\] 
where $C>0$ is independent of $f,$ $\varepsilon.$ 

Solutions to (\ref{whole_space_eq}) are understood as a pair $(u, \nabla u)$ in the space $H^1({\mathbb R}^d, d\mu^\varepsilon),$ defined ({\it cf.} \cite{Zhikov_Pastukhova_Bloch}) as the closure of the set $\{(\psi, \nabla\psi), \psi\in C_0^\infty({\mathbb R}^d)\}$ in the norm of 
$L^2({\mathbb R}^d, d\mu^\varepsilon)\oplus\bigl[L^2({\mathbb R}^d, d\mu^\varepsilon)\bigr]^d.$ For $f\in L^2({\mathbb R}^d, d\mu^\varepsilon),$ we  say that $(u, \nabla u)\in H^1({\mathbb R}^d, d\mu^\varepsilon)$ is a solution to (\ref{whole_space_eq}) if  
\begin{equation}
\int_{{\mathbb R}^d}A(\cdot/\varepsilon)\nabla u\cdot{\nabla\psi}\,d\mu^\varepsilon+\int_{{\mathbb R}^d}u\overline{\psi}\,d\mu^\varepsilon=\int_{{\mathbb R}^d}f\overline{\psi}\,d\mu^\varepsilon\qquad \forall 
(\psi,\nabla \psi)\in H^1({\mathbb R}^d, d\mu^\varepsilon).
\label{whole_space_eq_weak}
\end{equation}
Note that for each $\varepsilon>0$ the left-hand side of (\ref{whole_space_eq_weak}) is an equivalent inner product on $H^1({\mathbb R}^d, d\mu^\varepsilon),$ and its right-hand side is a linear bounded functional on $H^1({\mathbb R}^d, d\mu^\varepsilon).$
Invoking the Riesz representation theorem (see {\it e.g.} \cite[p.\,32]{Birman_Solomjak}) yields the existence and uniqueness of solution to (\ref{whole_space_eq}).

In what follows we study the resolvent of the operator ${\mathcal A}^\varepsilon$ with domain
\begin{align}
&{\rm dom}({\mathcal A}^\varepsilon)=\biggl\{u\in L^2({\mathbb R}^d, d\mu^\varepsilon):\ \exists\, \nabla u\in\bigl[L^2({\mathbb R}^d, d\mu^\varepsilon)\bigr]^d\ {\rm such\ that}\nonumber\\[0.5em] 
 &\int_{{\mathbb R}^d}A(\cdot/\varepsilon)\nabla u\cdot{\nabla\psi}\,d\mu^\varepsilon+\int_{{\mathbb R}^d}u\overline{\psi}\,d\mu^\varepsilon=\int_{{\mathbb R}^d}f\overline{\psi}\,d\mu^\varepsilon\qquad\forall(\psi,\nabla \psi)\in H^1({\mathbb R}^d, d\mu^\varepsilon)\nonumber\\[0.5em]
 &
\qquad\qquad\qquad\qquad\qquad\qquad\qquad\qquad\qquad{\rm for\ some}\ f\in L^2({\mathbb R}^d, d\mu^\varepsilon)\biggr\}.\label{Ae_domain}
\end{align}
defined by the formula ${\mathcal A}^\varepsilon u=f-u$ whenever $f\in L^2({\mathbb R}^d, d\mu^\varepsilon)$ and $u\in{\rm dom}({\mathcal A}^\varepsilon)$ are related as in (\ref{Ae_domain}). Note that while in general for a given $u\in L^2({\mathbb R}^d, d\mu^\varepsilon)$ there may be more than one element  $(u,\nabla u)\in H^1({\mathbb R}^d, d\mu^\varepsilon),$ 
the uniqueness of solution to (\ref{whole_space_eq}) 
implies that for each function $u\in{\rm dom}({\mathcal A}^\varepsilon)$ there is exactly one gradient $\nabla u$ such that the identity in (\ref{Ae_domain}) holds. 

Clearly, the operator ${\mathcal A}^\varepsilon$ is symmetric. By an argument similar to \cite[Section 7.1]{Zhikov2000}, we infer that ${\rm dom}({\mathcal A}^\varepsilon)$ is dense in $L^2({\mathbb R}^d, d\mu^\varepsilon).$ Indeed, it follows from (\ref{Ae_domain}) that if $f\in L^2({\mathbb R}^d, d\mu^\varepsilon),$ and $u, v\in{\rm dom} ({\mathcal A}^\varepsilon)$ are such that ${\mathcal A}^\varepsilon u+u=f,$ ${\mathcal A}^\varepsilon v+v=u,$ then
\begin{equation}
\int_{{\mathbb R}^d}f\overline{v}=\int_{{\mathbb R}^d}\vert u\vert^2.
\label{RH_id}
\end{equation}
The identity (\ref{RH_id}) implies that if $f$ is orthogonal to ${\rm dom}({\mathcal A}^\varepsilon)$ then $u=0,$ and hence $f=0.$ Furthermore, 
${\mathcal A}^\varepsilon$ is self-adjoint. Indeed, suppose that  $w\in {\rm dom}\bigl(({\mathcal A}^\varepsilon)^*\bigr)\subset L^2({\mathbb R}^d, d\mu^\varepsilon),$ so for some  $g\in L^2({\mathbb R}^d, d\mu^\varepsilon)$ one has 
\[
\int_{{\mathbb R}^d}({\mathcal A}^\varepsilon u)\overline{w}\,d\mu^\varepsilon=\int_{{\mathbb R}^d}u\overline{g}\,d\mu^\varepsilon\qquad \forall u\in {\rm dom}({\mathcal A}^\varepsilon).
\]
Consider the solution $v$ to the problem 
\[
{\mathcal A}^\varepsilon v+v=g+w.
\]
Then for all $u\in {\rm dom}({\mathcal A}^\varepsilon)$ one has 
\[
\int_{{\mathbb R}^d}({\mathcal A}^\varepsilon u+u)\overline{w}=\int_{{\mathbb R}^d}u\overline{(g+w)}=\int_{{\mathbb R}^d}u\overline{({\mathcal A}^\varepsilon v+v)}=\int_{{\mathbb R}^d}({\mathcal A}^\varepsilon u+u)\overline{v},
\]
where we use the fact that ${\mathcal A}^\varepsilon$ is symmetric and $u, v\in {\rm dom}({\mathcal A}^\varepsilon).$ Since ${\mathcal A}^\varepsilon u+u$ is an arbitrary element of $L^2({\mathbb R}^d, d\mu^\varepsilon),$ it follows that $w=v,$ and in particular 
$w\in {\rm dom}({\mathcal A}^\varepsilon).$


Similarly, we define the operator ${\mathcal A}^{\rm hom}$ associated with the problem (\ref{u0_eq}), so that (\ref{u0_eq}) holds if and only if $u^0=({\mathcal A}^{\rm hom}+I)^{-1}f.$

All gradients, integrals and differential operators below, unless indicated explicitly otherwise, are understood appropriately with respect to the measure $\mu.$ Whenever we write $\int_Q,$ we imply integration with respect to the measure $\mu$ and interchangeably use the notation and $L^2(Q, d\mu)$ and $L^2(Q)$ for the Lebesgue space of functions that are square integrable on $Q$ with respect to $\mu.$ 
Throughout the paper we use the notation $e_\varkappa$ for the exponent $\exp({\rm i}\varkappa\cdot y),$ $y\in Q,$ $\varkappa\in[-\pi, \pi)^d,$ and a simlilar notation  $e_\theta$ for the exponent $\exp({\rm i}\theta\cdot x),$ $x\in {\mathbb R}^d,$ $\theta\in\varepsilon^{-1}[-\pi, \pi)^d.$ 
We denote by $C_\#^\infty$ the set of $Q$-periodic functions in $C^\infty({\mathbb R}^d),$ and $\partial_j\phi,$ $\nabla \phi,$ $\nabla(e_\varkappa\phi)$ $\nabla (e_{\varepsilon\theta}\phi)$ stand for the classical derivatives and gradients of smooth functions $\phi,$ $e_\varkappa\phi,$ $e_{\varepsilon\theta}\phi.$ The symbol ``$:=$" stands is used to denote the expression on the right-hand side of the symbol by its left-hand side.

\section{Sobolev spaces of quasiperiodic functions}
\label{quasiperiodic}

The material of this section applies to an arbitrary Borel measure $\mu$ on $Q.$ The following definition is motivated by \cite[Section 3.1]{Zhikov2000}, \cite{Zhikov_Note}. 
\begin{definition}
\label{Sobolev_definition}
For each $\varkappa\in[-\pi, \pi)^d:=Q'$ we define the space $H_\varkappa^1$ as the closure, with respect to the natural norm of the direct sum
$L^2(Q)\oplus\bigl[L^2(Q)\bigr]^d,$
of the set 
$
\bigl\{\bigl(e_\varkappa\phi, \nabla (e_\varkappa\phi)\bigr): \phi\in C_\#^\infty\bigr\}.$ 
We use the notation $H_\#^1$ for the space $H_\varkappa^1,$ $\varkappa=0.$ For $(u,v)\in H_\varkappa^1$ we keep the usual notation $\nabla u$ for the second element $v$ in the pair.
\end{definition}

As discussed in \cite{Zhikov2000}, \cite{Zhikov_Note}, \cite{Zhikov_Pastukhova_Bloch},
there may be different elements in $H_\varkappa^1$ whose first components coincide. Indeed, for any $(u, v)\in H_\varkappa^1$  and a vector function $w$ obtained as the limit in 
$\bigl[L^2(Q)\bigr]^d$ of the classical gradients $\nabla(e_\varkappa\phi_n)$ for a sequence $\phi_n\in C_\#^\infty$ converging to zero in $L^2(Q)$ (``gradient of zero''), the pair 
$(u, v+w)$ is also an element of $H_\varkappa^1.$ Furthermore, there is a natural one-to-one mapping between $H_\varkappa^1$ and $H_\#^1$: for any element $(u, v)\in H_\varkappa^1$ the pair $\bigl(\overline{e_\varkappa}u,\,\overline{e_\varkappa}(v-{\rm i}u\varkappa)\bigr)$ is an element of $H_\#^1$ and for all $(\widetilde{u}, \widetilde{v})\in H_\#^1$ one has $\widetilde{v}=\overline{e_\varkappa}(v-{\rm i}u\varkappa)$ for some $(u, v)\in H_\varkappa^1.$ In view of this, for $(\widetilde{u}, \widetilde{v})\in H_\#^1$ we often write $\widetilde{v}=\nabla\widetilde{u}=\overline{e_\varkappa}\nabla(e_\varkappa\widetilde{u})-{\rm i}\widetilde{u}\varkappa,$ where either $\nabla\widetilde{u}$ or $\nabla(e_\varkappa\widetilde{u})$ is defined up to a gradient of zero.



Suppose that $A\in\bigl[L^\infty(Q, d\mu)\bigr]^{d\times d}$ is a pointwise positive and symmetric real-valued matrix function such that 
$A^{-1}\in \bigl[L^\infty(Q, d\mu)\bigr]^{d\times d},$ and for each $\varkappa\in Q'$ consider the operator ${\mathcal A}_\varkappa$ with domain ({\it cf.} (\ref{Ae_domain}))
\begin{align}
{\rm dom}({\mathcal A}_\varkappa)&=\biggl\{u\in L^2(Q):\ \exists\, \nabla(e_\varkappa u)\in\bigl[L^2(Q)\bigr]^d\ {\rm such\ that}\nonumber\\[0.5em] 
 &\int_QA\nabla(e_\varkappa u)\cdot{\nabla(e_\varkappa\varphi)}+\int_Qu\overline{\varphi}=\int_QF\overline{\varphi}\ \ \ \forall\varphi\in C_\#^\infty\ \ \ {\rm for\ some}\ F\in L^2(Q)\biggr\},\label{A_kappa}
\end{align}
defined by the formula ${\mathcal A}_\varkappa u=F-u$ whenever $F\in L^2(Q)$ and $u\in{\rm dom}({\mathcal A}_\varkappa)$ are related as described in the definition of ${\rm dom}({\mathcal A}_\varkappa).$ Notice that by the definition of $H^1_\varkappa,$ the set $C_\#^\infty$ of test functions in the identity in (\ref{A_kappa}) can be equivalently replaced by $H^1_\varkappa.$ As discussed in the previous section for the case of operator ${\mathcal A}^\varepsilon,$ since for $F=0$ one has $u=0,$ $\nabla (e_\varkappa u)=0,$ there is exactly one gradient $\nabla (e_\varkappa u)$ for which (\ref{A_kappa}) holds. Also, by an argument similar to the case of ${\mathcal A}^\varepsilon,$
the domain  ${\rm dom}({\mathcal A}_\varkappa)$ is dense in $L^2(Q)$ and ${\mathcal A}_\varkappa$ is self-adjoint.



In what follows,  we identify $H_\#^1$ and the the set of the first components of its elements, bearing in mind that the gradient of a function in $H_\#^1$  may not be unique. We also denote by $H_{\#,0}^1$ the (closed) subspace of $H_\#^1$ consisting of functions with zero $\mu$-mean over $Q.$ 

\section{Floquet transform}

\label{Floquet}

In this section we define a representation for functions in $L^2({\mathbb R}^d, d\mu)$ unitarily equivalent to ``Gelfand transform'', introduced in \cite{Gelfand} for the case of the Lebesgue measure. The properties of the Gelfand transform with respect to the measure $\mu$ are discussed in detail in \cite{Zhikov_Pastukhova_Bloch}, and here we give the definition of its Floquet version as well as the key property concerning the equation
(\ref{whole_space_eq}). We first define a ``scaled" version of the Floquet transform ({\it cf.} \cite{ChCoARMA}).


\begin{definition}
For $\varepsilon>0$ and $u\in C^\infty_0({\mathbb R}^d)$, the $\varepsilon$-Floquet transform ${\mathcal F}_\varepsilon u$ is the function  
\[
({\mathcal F}_\varepsilon u)(z, \theta)=\biggl(\frac{\varepsilon}{2\pi}\biggr)^{d/2}\sum_{n\in Z^{d}}u(z+\varepsilon n)\exp(-{\rm i}\varepsilon n\cdot\theta),\quad z\in\varepsilon Q,\ \theta\in\varepsilon^{-1}Q'.
\]
\end{definition}
The mapping ${\mathcal F}_\varepsilon$ preserves the norm, in the sense that
\[
 \Vert {\mathcal F}_\varepsilon u\Vert_{L^2(\varepsilon^{-1}Q'\times\varepsilon Q, d\theta\times d\mu^\varepsilon)}=\Vert u\Vert_{L^2({\mathbb R}^d, d\mu^\varepsilon)},
\]
and
it can therefore be extended to an isometry ${\mathcal F}_\varepsilon: L^2({\mathbb R}^d, d\mu^\varepsilon)\mapsto L^2(\varepsilon^{-1}Q'\times\varepsilon Q, d\theta\times d\mu^\varepsilon),$ for which we use the same term $\varepsilon$-Floquet transform. Note that the inverse of 
${\mathcal F}_\varepsilon$ is given by 
\begin{equation}
({\mathcal U}_\varepsilon g)(z)=\biggl(\frac{\varepsilon}{2\pi}\biggr)^{d/2}\int_{\varepsilon^{-1}Q'}g(\theta,z)\,d\theta,\quad z\in {\mathbb R}^d,\qquad g\in L^2(\varepsilon^{-1}Q'\times\varepsilon Q, d\theta\times d\mu^\varepsilon),
\label{inversion}
\end{equation}
where for each $\theta\in\varepsilon^{-1}Q'$ the function $g\in L^2(\varepsilon^{-1}Q'\times \varepsilon Q, d\theta\times d\mu^\varepsilon)$ is extended as $\theta$-quasiperiodic function to the whole of ${\mathbb R}^d$ so that 
\[
g(\theta,z)=\widetilde{g}(\theta, z)\exp({\rm i}z\cdot\theta),\quad z\in{\mathbb R}^d,\qquad \widetilde{g}(\theta,\cdot)\ \ \ \varepsilon Q{\text{\rm -periodic.}}
\]
Indeed, for all such functions $g$ the right-hand side 
(\ref{inversion}) is well defined and returns a function in $L^2({\mathbb R}^d),$ {\it cf.} \cite{Zhikov_Pastukhova_Bloch}:
\begin{equation*}
\bigl\Vert{\mathcal U}_\varepsilon g\bigr\Vert_{L^2({\mathbb R}^d)}^2=\sum_{n\in{\mathbb Z}^d}\bigl\Vert({\mathcal U}_\varepsilon g)(\cdot+\varepsilon n)\bigr\Vert_{L^2(\varepsilon Q)}^2
=\sum_{n\in{\mathbb Z}^d}\bigl\Vert\widehat{g}_n\bigr\Vert_{L^2(\varepsilon Q)}^2=\int_{\varepsilon Q}\int_{\varepsilon^{-1}Q'}\bigl\vert({\mathcal U}_\varepsilon g)(\cdot, \theta)\bigr\vert^2d\theta\,d\mu^\varepsilon,
\end{equation*}
where, for each $z\in \varepsilon Q,$
\[
\widehat{g}_n(z):=\biggl(\frac{\varepsilon}{2\pi}\biggr)^{d/2}\int_{\varepsilon^{-1}Q'}g(\theta, z)\exp({\rm i}\varepsilon n\cdot\theta)d\theta,\qquad n\in{\mathbb Z}^d,
\]
are the Fourier coefficients of the $\varepsilon^{-1}Q'$-periodic function $g(\cdot, z).$ Since the image of ${\mathcal U}_\varepsilon$ contains $C_0^\infty({\mathbb R}^d)$ and for all
$u\in C_0^\infty({\mathbb R}^d)$ one has $u={\mathcal U}_\varepsilon{\mathcal F}_\varepsilon u,$ it follows that ${\mathcal F}_\varepsilon$ is one-to-one and thus, unitary.

Combining the $\varepsilon$-Floquet transform and the unitary scaling transform 
\begin{align*}
{\mathcal T}_\varepsilon h(\theta,y)&:=\varepsilon^{d/2}h(\theta, \varepsilon y),\qquad \theta\in\varepsilon^{-1}Q',\ y\in Q,\qquad \forall h\in L^2(\varepsilon^{-1}Q'\times\varepsilon Q, 
d\theta\times d\mu^\varepsilon),\\[0.5em]
({\mathcal T}_\varepsilon^{-1} h)(\theta, z)&=\varepsilon^{-d/2}h(\theta, z/\ep),\quad \theta\in\varepsilon^{-1}Q',\ z\in\varepsilon Q,\qquad \forall h\in L^2(\varepsilon^{-1}Q'\times Q, d\theta\times d\mu),
\end{align*}
we obtain a representation for the operator ${\mathcal A}^\varepsilon,$ as follows. 

\begin{proposition}
\label{Floquet_resolvent}
For each $\varepsilon>0$ the operator ${\mathcal A}^\varepsilon$ is unitarily equivalent to the direct integral of the family ${\mathcal A}_{\varepsilon\theta},$ $\theta\in\varepsilon^{-1}Q',$ namely 
\[
({\mathcal A}^\varepsilon+I)^{-1}={\mathcal F}_\varepsilon^{-1}{\mathcal T}_\varepsilon^{-1}\int_{\varepsilon^{-1}Q'}^\oplus e_{\varepsilon\theta}(\varepsilon^{-2}{\mathcal A}_{\varepsilon\theta}+I)^{-1}\overline{e_{\varepsilon\theta}}\,d\theta\,{\mathcal T}_\varepsilon{\mathcal F}_\varepsilon,
\]
where $\overline{e_{\varepsilon\theta}},$ $e_{\varepsilon\theta}$  represent the operators of multiplication by  
$\overline{e_{\varepsilon\theta}},$ $e_{\varepsilon\theta}.$
\end{proposition}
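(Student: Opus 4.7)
The plan is to verify that the operator on the right-hand side, call it $\mathcal{R}_\varepsilon$, satisfies $(\mathcal{A}^\varepsilon + I)\mathcal{R}_\varepsilon = I$ on $L^2(\mathbb{R}^d, d\mu^\varepsilon)$; by the uniqueness of the resolvent already established in the introduction, this forces $\mathcal{R}_\varepsilon = (\mathcal{A}^\varepsilon + I)^{-1}$. Boundedness of $\mathcal{R}_\varepsilon$ is immediate from the unitarity of $\mathcal{F}_\varepsilon$ and $\mathcal{T}_\varepsilon$ together with the estimate $\|(\varepsilon^{-2}\mathcal{A}_{\varepsilon\theta} + I)^{-1}\| \le 1$, which follows from the non-negativity of each fibre operator $\mathcal{A}_{\varepsilon\theta}$ (itself a consequence of the symmetry and coercivity of the bilinear form in (\ref{A_kappa})).

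The diagonalisation step is the heart of the argument. Given $f \in L^2(\mathbb{R}^d, d\mu^\varepsilon)$, for each $\theta \in \varepsilon^{-1}Q'$ set $\widetilde{U}_\theta := (\varepsilon^{-2}\mathcal{A}_{\varepsilon\theta} + I)^{-1}\overline{e_{\varepsilon\theta}}(\mathcal{T}_\varepsilon\mathcal{F}_\varepsilon f)(\theta,\cdot) \in \mathrm{dom}(\mathcal{A}_{\varepsilon\theta})$, with its uniquely associated gradient $\nabla(e_{\varepsilon\theta}\widetilde{U}_\theta) \in [L^2(Q)]^d$ supplied by (\ref{A_kappa}). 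Writing $u := \mathcal{R}_\varepsilon f$, so that $(\mathcal{T}_\varepsilon\mathcal{F}_\varepsilon u)(\theta,\cdot) = e_{\varepsilon\theta}\widetilde{U}_\theta$, I would construct the candidate $\nabla u \in [L^2(\mathbb{R}^d, d\mu^\varepsilon)]^d$ by applying $\mathcal{F}_\varepsilon^{-1}\mathcal{T}_\varepsilon^{-1}$ to the rescaled fibre gradients $\varepsilon^{-1}\nabla(e_{\varepsilon\theta}\widetilde{U}_\theta)$, the factor $\varepsilon^{-1}$ reflecting the Jacobian of the scaling $y = z/\varepsilon$. Testing (\ref{whole_space_eq_weak}) against arbitrary $\psi \in C_0^\infty(\mathbb{R}^d)$ and applying Plancherel for $\mathcal{F}_\varepsilon$ and $\mathcal{T}_\varepsilon$ fibrewise---using that multiplication by the $\varepsilon$-periodic matrix $A(\cdot/\varepsilon)$ commutes with $\mathcal{F}_\varepsilon$---reduces each of the three integrals to an iterated integral over $\varepsilon^{-1}Q' \times Q$ whose integrand at each $\theta$ is, after the $\varepsilon^2$-rescaling absorbed by the $\varepsilon^{-2}$ factor in the fibre resolvent, precisely the weak identity (\ref{A_kappa}) for $\mathcal{A}_{\varepsilon\theta}\widetilde{U}_\theta$. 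This yields $(u,\nabla u) \in \mathrm{dom}(\mathcal{A}^\varepsilon)$ with $(\mathcal{A}^\varepsilon + I)u = f$, as required.

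The main obstacle is the non-uniqueness of the gradient in $H^1(\mathbb{R}^d, d\mu^\varepsilon)$ and in each $H^1_\varkappa$: one must ensure that the fibre gradients selected by (\ref{A_kappa}) assemble measurably in $\theta$ into a genuine element of $H^1(\mathbb{R}^d, d\mu^\varepsilon)$ and that the resulting identity is independent of the representative. To sidestep this I would first carry out the calculation for data $f$ in the dense subclass $\mathcal{F}_\varepsilon^{-1}\mathcal{T}_\varepsilon^{-1}\bigl\{e_{\varepsilon\theta}\widetilde{h}(\theta,\cdot):\widetilde{h}\in C^\infty_c(\varepsilon^{-1}Q';C^\infty_\#)\bigr\}$, on which all fibre manipulations are classical, Fubini is automatic, and the gradients are unambiguous, and then extend to all of $L^2(\mathbb{R}^d, d\mu^\varepsilon)$ by $L^2$-continuity, using the uniform-in-$\theta$ bound on the fibre resolvents together with closedness of $\mathcal{A}^\varepsilon$.
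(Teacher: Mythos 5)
Your plan and the paper's sketch move in opposite directions. You propose to construct $\mathcal{R}_\varepsilon$ from the fibre resolvents and verify that it is a right inverse, $(\mathcal{A}^\varepsilon + I)\mathcal{R}_\varepsilon = I$. The paper instead verifies the left-inverse relation: for $f\in C_0^\infty(\mathbb R^d)$ it takes the already-known solution $(u,\nabla u)\in H^1(\mathbb R^d,d\mu^\varepsilon)$ (whose components decay exponentially at infinity, so the Floquet series is well behaved), applies the scaled Floquet transform, shows that $\bigl(e_{\varepsilon\theta}u_\theta^\varepsilon,\nabla(e_{\varepsilon\theta}u_\theta^\varepsilon)\bigr)\in H^1_{\varepsilon\theta}$ by Floquet-transforming a $C_0^\infty$ approximating sequence for $(u,\nabla u)$, and reads off the fibre identity; density of $C_0^\infty$ then finishes. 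Since $\mathcal{A}^\varepsilon+I$ is already known to be a bijection, either a left or a right inverse determines the resolvent, so your direction is admissible in principle.

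However, your direction is the substantially harder one for a general Borel measure $\mu$, and the dense-subclass device does not close the gap you yourself flag. The difficulty is not merely measurability in $\theta$ but membership in $H^1(\mathbb R^d,d\mu^\varepsilon)$: to have $\mathcal{R}_\varepsilon f\in\mathrm{dom}(\mathcal{A}^\varepsilon)$ you need a single sequence $\psi_n\in C_0^\infty(\mathbb R^d)$ with $(\psi_n,\nabla\psi_n)\to(\mathcal{R}_\varepsilon f,\nabla\mathcal{R}_\varepsilon f)$ in $L^2(\mathbb R^d,d\mu^\varepsilon)\oplus[L^2(\mathbb R^d,d\mu^\varepsilon)]^d$. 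The statement that on $\widetilde h\in C_c^\infty(\varepsilon^{-1}Q';C_\#^\infty)$ ``all fibre manipulations are classical'' is too optimistic: the fibre solutions $\widetilde U_\theta=(\varepsilon^{-2}\mathcal{A}_{\varepsilon\theta}+I)^{-1}\widetilde h(\theta,\cdot)$ lie in $L^2(Q,d\mu)$ and are generally not smooth, no matter how smooth the data are, since $\mu$ may be supported on a lower-dimensional set. Each $(e_{\varepsilon\theta}\widetilde U_\theta,\nabla(e_{\varepsilon\theta}\widetilde U_\theta))$ is a limit of pairs $(e_{\varepsilon\theta}\varphi_n^\theta,\nabla(e_{\varepsilon\theta}\varphi_n^\theta))$ with $\varphi_n^\theta\in C_\#^\infty$, but producing approximants $\varphi_n^\theta$ that depend measurably and uniformly on $\theta$ and then synthesising them via $\mathcal F_\varepsilon^{-1}\mathcal T_\varepsilon^{-1}$ into a $C_0^\infty(\mathbb R^d)$ approximation of $(\mathcal R_\varepsilon f,\nabla\mathcal R_\varepsilon f)$ is precisely the nontrivial ``Bloch lifting'' that \cite{Zhikov_Pastukhova_Bloch} is designed to provide. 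The paper's direction sidesteps this entirely: projecting a fixed $H^1$-pair into the fibres costs nothing, because the Floquet transforms of a single approximating sequence $\phi_n\in C_0^\infty(\mathbb R^d)$ automatically approximate in every fibre at once. If you wish to retain your right-inverse direction you should invoke the Bloch-principle result of \cite{Zhikov_Pastukhova_Bloch} (that the Floquet transform is unitary between $H^1(\mathbb R^d,d\mu^\varepsilon)$ and the direct integral of the $H^1_{\varepsilon\theta}$) explicitly, rather than relying on smoothness of the data.
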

\begin{proof}[Sketch of proof]
The argument is similar to \cite{Zhikov_Pastukhova_Bloch}. Taking first solutions $(u,\nabla u)\in H^1({\mathbb R}^d, d\mu^\varepsilon)$
to (\ref{whole_space_eq}) with $f\in C_0^\infty({\mathbb R}^d),$ whose both components can be shown to decay exponentially at infinity, {\it cf.} \cite[Proposition 5.3]{Zhikov_Pastukhova_Bloch}, we denote, for each such $u,$ the ``periodic amplitude'' of its scaled $\varepsilon$-Floquet transform: 
\begin{equation}
u_\theta^\varepsilon:=\overline{e_{\varepsilon\theta}}{\mathcal T}_\varepsilon{\mathcal F}_\varepsilon u=\biggl(\frac{\varepsilon^2}{2\pi}\biggr)^{d/2}\sum_{n\in Z^{d}}u(\varepsilon y+\varepsilon n)\exp\bigl(-{\rm i}(\varepsilon y+\varepsilon n)\cdot\theta\bigr).
\label{combined}
\end{equation}
Note that for any choice of the gradient $\nabla u$ (and hence for the one entering (\ref{whole_space_eq})) the expression
\[
\nabla(e_{\varepsilon\theta}u_\theta^\varepsilon)(y)=\varepsilon\biggl(\frac{\varepsilon^2}{2\pi}\biggr)^{d/2}\sum_{n\in Z^{d}}\nabla u(\varepsilon y+\varepsilon n)\exp\bigl(-{\rm i}(\varepsilon y+\varepsilon n)\cdot\theta\bigr),\qquad y\in Q,
\]
is a gradient of $e_{\varepsilon\theta}u_\theta^\varepsilon,$ in the sense that $\bigl(e_{\varepsilon\theta}u_\theta^\varepsilon, \nabla(e_{\varepsilon\theta}u_\theta^\varepsilon)\bigr)\in H^1_{\varepsilon\theta},$ as shown by considering an appropriate sequence $\phi_n\in C_0^\infty({\mathbb R}^d)$ whose classical gradients converge to $\nabla u$ in $L^2({\mathbb R}^d, d\mu^\varepsilon).$ Therefore
\begin{equation}
\varepsilon^{-2}\int_QA\nabla(e_{\varepsilon\theta}u_\theta^\varepsilon)\cdot{\nabla(e_{\varepsilon\theta}\varphi)}\,d\mu+\int_Qe_{\varepsilon\theta}u_\theta^\varepsilon\,\overline{e_{\varepsilon\theta}\varphi}\,d\mu=\int_Qe_{\varepsilon\theta}F\,\overline{e_{\varepsilon\theta}\varphi}\,d\mu\qquad\forall\varphi\in C_\#^\infty,
\label{fibre_id}
\end{equation}
where $F=\overline{e_{\varepsilon\theta}}{\mathcal T}_\varepsilon{\mathcal F}_\varepsilon f.$ The density of $f\in C_0^\infty({\mathbb R}^d)$ in 
$L^2({\mathbb R}^d, d\mu^\varepsilon)$ implies the claim.
\end{proof}

In what follows we study the asymptotic behaviour of the solutions $u^\varepsilon_\theta$ to the problems
\begin{equation}
\varepsilon^{-2}\overline{e_{\varepsilon\theta}}\nabla\cdot A\nabla(e_{\varepsilon\theta}u_\theta^\varepsilon)+u_\theta^\varepsilon=F,\qquad\varepsilon>0,\ \ \theta\in\varepsilon^{-1}Q',
\label{strong_form}
\end{equation}
understood in the sense of the identity (\ref{fibre_id}).

\section{Asymptotic approximation of $u^\varepsilon_\theta$}

\label{main_result}

In what follows we make the following key assumption on the measure $\mu.$

\begin{assumption}
\label{ass1}
There exists $C_{\rm P}=C_{\rm P}(\mu)>0$ such that 
for all $\varkappa\in Q'$ and $\bigl(e_\varkappa u, \nabla (e_\varkappa u)\bigr)\in H_\varkappa^1$ the Poincar\'{e}-type inequality holds:
\begin{equation}
\biggl\Vert u-\int_Qu\biggr\Vert_{L^2(Q)}\le  C_{\rm P}\bigl\Vert\nabla(e_\varkappa u)\bigr\Vert_{[L^2(Q)]^d}.
\label{Poincare}
\end{equation}
\end{assumption}
In Section \ref{planes_sec} we provide an example of a class of singular measures that satisfy the above assumption. This class can be extended further, see {\it e.g.} \cite{ChDO_20}.

In what follows we also assume that $A$ is a scalar matrix. The analysis of the general case is similar: the modifications required concern the condition on the mean of the unit-cell solutions defined next.

Consider the vector $N=(N_1, N_2, ..., N_d)$ of solutions to the
unit cell problems\footnote{In the case of matrix-valued $A,$ the condition on the mean of the solutions $N_j,$ $j=1,2,\dots, d,$  is replaced by 
$\int_Q(A\theta\cdot\theta)N_j=0$ for $\theta\neq0,$ with no condition imposed for 
$\theta=0,$ so the mean of $N_j$ (but not its gradient) depends on $\theta.$}
\begin{equation}
-\nabla\cdot A\nabla N_j=\partial_jA,\qquad \int_Q AN_j=0,\qquad j=1,2,\dots, d.
\label{N_equation}
\end{equation}
The right-hand side of (\ref{N_equation}) is understood as an element of the space $(H^1_\#)^*$ of linear continuous functionals on $H^1_\#$: for a test function $\varphi\in C_\#^\infty$ the action of $\partial_jA$ on $\varphi$ is given by
\[
\bigl\langle\partial_jA, \varphi\bigr\rangle=\int_Q A \,\overline{\partial_j\varphi},
\]
and the action of the same functional on the whole space $H^1_\#$ is obtained by closure. In particular, for a pair ${\mathcal V}=(v, \nabla v)\in H^1_\#$ we have 
\begin{equation}
\bigl\langle\partial_jA, \mathcal V\bigr\rangle=\int_Q A\,\overline{\partial_j v}.
\label{N_functional}
\end{equation}

\begin{proposition} For each $j=1,2,\dots d,$ there exists a unique solution $N_j\in H^1_\#$ to (\ref{N_equation}).
\end{proposition}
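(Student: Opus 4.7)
The plan is a standard Riesz/Lax--Milgram argument on the closed subspace $H^1_{\#,0}\subset H^1_\#$ of pairs $(u,\nabla u)$ with $\int_Q u\,d\mu=0$, coupled with a Poincaré-type inequality derived from ergodicity and the compact embedding $H^1_\#\subset L^2(Q,d\mu)$. The weak formulation sought is: find $(N_j,\nabla N_j)\in H^1_\#$ with $\int_Q AN_j\,d\mu=0$ such that
\[
\int_Q A\nabla N_j\cdot\overline{\nabla\varphi}\,d\mu=\int_Q A\,\overline{\partial_j\varphi}\,d\mu\qquad\forall\varphi\in C_\#^\infty,
\]
with the identity extending by density and continuity to all $(\varphi,\nabla\varphi)\in H^1_\#$, using the definition of $H^1_\#$ as a closure. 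Note that constant test functions kill both sides, so one may equivalently test against $H^1_{\#,0}$.

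The key preliminary step is the Poincaré inequality: there exists $C>0$ such that $\Vert u\Vert_{L^2(Q)}\le C\Vert v\Vert_{L^2(Q)}$ for every $(u,v)\in H^1_{\#,0}$. I would prove this by contradiction. Given a sequence $(u_n,v_n)\in H^1_{\#,0}$ with $\Vert u_n\Vert=1$ and $\Vert v_n\Vert\to 0$, compactness of the embedding (along a subsequence) produces $u_n\to u$ in $L^2(Q)$, so $(u_n,v_n)\to(u,0)$ in $L^2\oplus[L^2]^d$, and by closedness $(u,0)\in H^1_\#$. Choosing a sequence $\varphi_k\in C_\#^\infty$ with $(\varphi_k,\nabla\varphi_k)\to(u,0)$, the ergodicity assumption applied to $\varphi_k$ (whose gradients vanish in $L^2$) yields $\varphi_k\to c$ in $L^2(Q)$ for some constant $c$. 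Hence $u=c$, and since $\int_Q u\,d\mu=0$ one concludes $c=0$, contradicting $\Vert u\Vert=1$. This is the step I expect to be the main obstacle: the argument must navigate the possible non-uniqueness of gradients and use ergodicity in exactly the form stated.

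With the Poincaré inequality in hand, on $H^1_{\#,0}$ the bilinear form
\[
a\bigl((u_1,v_1),(u_2,v_2)\bigr):=\int_Q Av_1\cdot\overline{v_2}\,d\mu
\]
is symmetric, continuous, and coercive (since $A\ge\alpha>0$ in the $\mu$-essential sense gives $a((u,v),(u,v))\ge\alpha\Vert v\Vert^2$, and Poincaré controls $\Vert u\Vert$ by $\Vert v\Vert$). The linear functional $(u,v)\mapsto\int_Q A\overline{v_j}\,d\mu$ is continuous on $H^1_{\#,0}$ by the boundedness of $A$. The Riesz representation theorem then delivers a unique $(\widetilde N_j,\nabla\widetilde N_j)\in H^1_{\#,0}$ satisfying the variational identity.

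Finally I would normalise and conclude uniqueness. Setting
\[
N_j:=\widetilde N_j-\Bigl(\textstyle\int_Q A\,d\mu\Bigr)^{-1}\int_Q A\widetilde N_j\,d\mu,\qquad \nabla N_j:=\nabla\widetilde N_j,
\]
produces an element of $H^1_\#$ with $\int_Q AN_j\,d\mu=0$; the variational identity persists, since the shift is a constant whose contribution cancels on both sides. For uniqueness, if $N_j^{(1)}$ and $N_j^{(2)}$ are two such solutions, their difference lies in $H^1_\#$, satisfies the homogeneous variational identity, and tested against itself in the $A$-weighted form yields $\nabla(N_j^{(1)}-N_j^{(2)})=0$. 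Approximating by $\varphi_k\in C_\#^\infty$ and invoking ergodicity forces $N_j^{(1)}-N_j^{(2)}$ to be a constant, and the normalisation $\int_Q AN_j=0$ (using $\int_Q A>0$) fixes that constant to be zero.
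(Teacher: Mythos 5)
Your proof is correct and takes essentially the same route as the paper: a Riesz (Lax--Milgram) argument on the mean-zero subspace $H^1_{\#,0}$, with coercivity supplied by the Poincar\'e inequality, followed by the shift $\widetilde N_j\mapsto \widetilde N_j+a$ chosen so that $\int_Q AN_j=0$. The only addition is that you prove the Poincar\'e inequality from scratch via compactness and ergodicity, whereas the paper delegates it to a footnote citing the spectral gap of the Laplacian on the torus; these are the same fact stated in two dialects.
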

\begin{proof}
It follows from Assumption \ref{ass1} on the measure $\mu,$ by setting $\kappa=0$ in (\ref{Poincare}), that the following Poincar\'{e} inequality holds:
\begin{equation*}
\biggl\Vert u-\int_Q u\biggr\Vert_{L^2(Q)}\le C_{\rm P}\Vert\nabla u\Vert_{[L^2(Q)]^d},\quad C_{\rm P}>0,\qquad \forall\ (u, \nabla u)\in H^1_\#.
\end{equation*}
Therefore, the sesquilinear form
\[
\int_Q A\nabla u\cdot{\nabla v},\qquad (u, \nabla u), (v, \nabla v)\in H_{\#, 0}^1,
\]
is bounded and coercive, and hence defines an equivalent inner product in $H_{\#, 0}^1.$ Bearing in mind that (\ref{N_functional}) is a linear bounded functional on $H_{\#, 0}^1,$ we infer by the Riesz representation theorem (see {\it e.g.} \cite[p.\,32]{Birman_Solomjak}) 
that for each $j=1,2,\dots d,$ the equation 
\[
-\nabla\cdot A\nabla u=\partial_jA,
\] 
has a unique solution in $\widetilde{N}_j\in H_{\#, 0}^1,$ and therefore its arbitrary solution in $H^1_\#$ has the form $\widetilde{N}_j+a,$ $a\in{\mathbb C}.$ 
Setting 
\[
a=-\biggl(\int_Q A\biggr)^{-1}\int_Q A\widetilde{N}_j,\qquad N_j:=\widetilde{N_j}+a,
\]
concludes the proof.
\end{proof}


\begin{theorem}
\label{main_theorem}
Suppose that Assumption \ref{ass1} holds for the measure $\mu.$ Then the following estimate holds for the solutions to (\ref{strong_form}) with a constant $C>0$ independent of $\varepsilon,$ $\theta,$ $F:$
\begin{equation}
\bigl\Vert u_\theta^\varepsilon-c_\theta\bigr\Vert_{L^2(Q)}\le C\varepsilon\Vert F\Vert_{L^2(Q)},
\label{main_est}
\end{equation}
where
\begin{equation}
c_\theta=c_\theta(F):=\biggl(\theta\cdot\biggl\{\int_Q A(\nabla N+I)\biggr\}\theta+1\biggr)^{-1}\int_QF,\qquad\theta\in\varepsilon^{-1}Q'.
\label{c_condition}
\end{equation}
\end{theorem}

\begin{corollary}
\label{cor_main}
Under the conditions of the above theorem, there exists $C>0$ such that
\[
\bigl\Vert u^\varepsilon-u^0\bigr\Vert_{L^2({\mathbb R}^d, d\mu^\varepsilon)}\le C\varepsilon\Vert f\Vert_{L^2({\mathbb R}^d, d\mu^\varepsilon)}\quad\forall\, \varepsilon>0,\ f\in L^2({\mathbb R}^d, d\mu^\varepsilon),
\]
where $u^\varepsilon$ are the solutions to the original family (\ref{whole_space_eq}) and $u^0$ is the solution to the homogenised equation (\ref{u0_eq}) with 
\[
A^{\rm hom}:=\int_Q A(\nabla N+I).
\] 
\end{corollary}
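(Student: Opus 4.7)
The plan is to lift the fibre-wise estimate of Theorem \ref{main_theorem} to the whole-space resolvent comparison via the Floquet decomposition of Proposition \ref{Floquet_resolvent}. Since $A^{\rm hom}$ is itself a constant positive matrix (inheriting positivity and boundedness from $A$), that proposition applies equally well to $\mathcal{A}^{\rm hom},$ decomposing it into fibre operators on $L^2(Q, d\mu)$ with constant coefficient $A^{\rm hom}.$ Writing $F(\theta, \cdot) := \overline{e_{\varepsilon\theta}} \mathcal{T}_\varepsilon \mathcal{F}_\varepsilon f$ for the combined periodic amplitude of $f,$ and denoting by $u^\varepsilon_\theta$ and $u^{0,\varepsilon}_\theta$ the fibre solutions of \eqref{strong_form} associated with $A$ and with $A^{\rm hom}$ respectively, unitarity of $\mathcal{F}_\varepsilon$ and $\mathcal{T}_\varepsilon$ reduces the claim to proving the fibre-wise bound
\[
\bigl\Vert u^\varepsilon_\theta - u^{0,\varepsilon}_\theta \bigr\Vert_{L^2(Q)} \le C\varepsilon \bigl\Vert F(\theta, \cdot) \bigr\Vert_{L^2(Q)}
\]
uniformly in $\theta \in \varepsilon^{-1}Q',$ and integrating its square against $d\theta.$

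The crucial observation is that Theorem \ref{main_theorem} applied separately to $A$ and to $A^{\rm hom}$ produces the same leading-order constant $c_\theta.$ Indeed, for the constant matrix $A^{\rm hom}$ the unit-cell problem \eqref{N_equation} becomes $-\nabla \cdot A^{\rm hom} \nabla N^{\rm hom}_j = 0,$ so ergodicity of $\mu$ together with the Poincar\'e inequality \eqref{Poincare_standard} force $\nabla N^{\rm hom}_j = 0.$ Evaluating \eqref{c_condition} with $A$ replaced by $A^{\rm hom}$ and $\nabla N$ replaced by zero therefore yields $(\theta \cdot A^{\rm hom} \theta + 1)^{-1} \int_Q F,$ which coincides with the $c_\theta$ of \eqref{c_condition} by the very definition of $A^{\rm hom}$ in the statement of the corollary. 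Two applications of Theorem \ref{main_theorem}, one to each problem, combined with the triangle inequality, then deliver the displayed fibre estimate with constant $2C.$

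To reassemble the estimate on $\mathbb{R}^d,$ I would invoke Proposition \ref{Floquet_resolvent} for both resolvents together with the isometry of $\mathcal{F}_\varepsilon$ and $\mathcal{T}_\varepsilon$ to identify
\[
\bigl\Vert u^\varepsilon - u^0 \bigr\Vert_{L^2(\mathbb{R}^d, d\mu^\varepsilon)}^2 = \int_{\varepsilon^{-1}Q'} \bigl\Vert u^\varepsilon_\theta - u^{0,\varepsilon}_\theta \bigr\Vert_{L^2(Q)}^2 d\theta,
\]
and similarly $\Vert f \Vert_{L^2(\mathbb{R}^d, d\mu^\varepsilon)}^2 = \int_{\varepsilon^{-1}Q'} \Vert F(\theta, \cdot) \Vert_{L^2(Q)}^2 d\theta.$ Integrating the squared fibre bound then yields the corollary directly. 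The main point I expect to verify carefully is that the constants in Theorem \ref{main_theorem} — namely $C_{\rm P}$ together with the upper and lower bounds on $A$ — are used through hypotheses that depend on $\mu$ alone or on bounds that $A^{\rm hom}$ inherits from $A,$ so that a single universal $C$ governs both applications of the theorem; this is transparent once one tracks the constants through the proof of Theorem \ref{main_theorem} but must be stated explicitly to justify the uniform estimate over $\theta.$
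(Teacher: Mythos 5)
Your proposal takes a genuinely different route from the paper's. The paper applies the Floquet decomposition only to $\mathcal{A}^{\varepsilon}$, leaves $(\mathcal{A}^{\rm hom}+I)^{-1}f$ in its explicit Fourier-multiplier form $(2\pi)^{-d/2}\int_{\mathbb{R}^d}(\theta\cdot A^{\rm hom}\theta+1)^{-1}\widehat{f}(\theta)e_\theta\,d\theta$, and observes that the difference between this and $\mathcal{F}_\varepsilon^{-1}\mathcal{T}_\varepsilon^{-1}e_{\varepsilon\theta}c_\theta$ is a tail integral over $\mathbb{R}^d\setminus\varepsilon^{-1}Q'$, controlled by $O(\varepsilon^2)$ thanks to the quadratic decay of the symbol. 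You instead decompose $\mathcal{A}^{\rm hom}$ via Proposition \ref{Floquet_resolvent} as well and apply Theorem \ref{main_theorem} a second time to the constant-coefficient fibre family, using the pleasing fact that $N^{\rm hom}=0$ so $(A^{\rm hom})^{\rm hom}=A^{\rm hom}$ and both problems share the same constant $c_\theta$. This is an elegant reuse of the main theorem and dispenses with the explicit Fourier-tail computation; it also automatically produces a comparison in the measure-based sense.

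There are, however, two points that need attention. The more serious one: Theorem \ref{main_theorem} is established under the standing hypothesis, stated just before (\ref{N_equation}), that $A$ is a \emph{scalar} matrix, while $A^{\rm hom}=\int_Q A(\nabla N+I)$ is in general a genuine (non-diagonal) constant matrix. You therefore cannot apply the theorem to $A^{\rm hom}$ verbatim; you must invoke the matrix-valued extension, which the paper's footnote to (\ref{N_equation}) indicates changes only the normalisation of the correctors $N_j$ --- a change that is harmless here because $N^{\rm hom}_j=0$ --- but this must be said explicitly, since otherwise the second application of Theorem \ref{main_theorem} is out of scope. Related to this, you assert positivity and boundedness of $A^{\rm hom}$ without comment; boundedness is immediate, but the lower bound $A^{\rm hom}\ge cI$ needed for $\left(A^{\rm hom}\right)^{-1}\in L^\infty$ relies on the variational identity $A^{\rm hom}\xi\cdot\xi=\int_Q A\,(\nabla(N\cdot\xi)+\xi)\cdot(\nabla(N\cdot\xi)+\xi)$ together with ergodicity of $\mu$, and is worth recording. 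The second, more structural point: by sending $\mathcal{A}^{\rm hom}$ through Proposition \ref{Floquet_resolvent} you are implicitly treating it as the \emph{measure-based} constant-coefficient operator (gradients in $H^1(\mathbb{R}^d,d\mu^\varepsilon)$), whereas the paper's proof compares to the \emph{classical} Fourier-multiplier resolvent. For a singular $\mu$ these operators do not coincide, so the two proofs establish subtly different versions of the estimate (they agree only modulo a further $O(\varepsilon)$); you should state which interpretation of (\ref{u0_eq}) you take and, if it is the classical one, supply the extra comparison step.
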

\begin{proof}[Proof of Corollary \ref{cor_main}]
Consider $f\in L^2({\mathbb R}^d, d\mu^\varepsilon)$ and denote (see (\ref{combined}))
$
f_\theta^\varepsilon:=\overline{e_{\varepsilon\theta}}{\mathcal T}_\varepsilon{\mathcal F}_\varepsilon f,
$ 
so that 
\[
\int_Qf_\theta^\varepsilon=\widehat{f}(\theta),\quad \theta\in\varepsilon^{-1}Q', \qquad{\rm where}\quad \widehat{f}(\theta):=(2\pi)^{-d/2}\int_{{\mathbb R}^d}f\overline{e_\theta}\,d\mu^\varepsilon,\quad \theta\in{\mathbb R}^d.
\]
Also, consider the solutions $u_\theta^\varepsilon$ to (\ref{strong_form}) with $F=f_\theta^\varepsilon.$ Using Proposition \ref{Floquet_resolvent} we obtain
\begin{align*}
&({\mathcal A}^\varepsilon+I)^{-1}f-({\mathcal A}^{\rm hom}+I)^{-1}f
={\mathcal F}_\varepsilon^{-1}{\mathcal T}_\varepsilon^{-1}
e_{\varepsilon\theta}(\varepsilon^{-2}{\mathcal A}_{\varepsilon\theta}+I)^{-1}f_\theta^\varepsilon-({\mathcal A}^{\rm hom}+I)^{-1}f\nonumber\\[0.6em]
&={\mathcal F}_\varepsilon^{-1}{\mathcal T}_\varepsilon^{-1}e_{\varepsilon\theta}u_\theta^\varepsilon-({\mathcal A}^{\rm hom}+I)^{-1}f
\nonumber\\[0.6em]
&=\bigl\{{\mathcal F}_\varepsilon^{-1}{\mathcal T}_\varepsilon^{-1}e_{\varepsilon\theta}u_\theta^\varepsilon-{\mathcal F}_\varepsilon^{-1}{\mathcal T}_\varepsilon^{-1}e_{\varepsilon\theta}c_\theta(f_\theta^\varepsilon)\bigr\}+\bigl\{{\mathcal F}_\varepsilon^{-1}{\mathcal T}_\varepsilon^{-1}e_{\varepsilon\theta}c_\theta(f_\theta^\varepsilon)-({\mathcal A}^{\rm hom}+I)^{-1}f\bigr\},\nonumber
\end{align*}
where the operators ${\mathcal A}^\varepsilon,$ ${\mathcal A}^{\rm hom}$ are defined at the end of Section \ref{intro}. 
In view of Theorem \ref{main_theorem}, the unitary property of ${\mathcal F}_\varepsilon,$ ${\mathcal T}_\varepsilon$ and the operator of multiplication by $e_{\varepsilon\theta},$ as well as the fact that
\begin{align*}
&{\mathcal F}_\varepsilon^{-1}{\mathcal T}_\varepsilon^{-1}e_{\varepsilon\theta}(\theta\cdot A^{\rm hom}\theta+1)^{-1}\widehat{f}(\theta)-(2\pi)^{-d/2}\int_{{\mathbb R}^d}(\theta\cdot A^{\rm hom}\theta+1)^{-1}\widehat{f}(\theta)
e_\theta\,d\theta\nonumber\\[0.3em]
&=(2\pi)^{-d/2}\biggl\{\int_{\varepsilon^{-1}Q'}(\theta\cdot A^{\rm hom}\theta+1)^{-1}\widehat{f}(\theta)e_\theta\,d\theta
-\int_{{\mathbb R}^d}(\theta\cdot A^{\rm hom}\theta+1)^{-1}\widehat{f}(\theta)e_\theta\,d\theta
\biggr\}\nonumber\\[0.3em]
&=-(2\pi)^{-d/2}\int_{{\mathbb R}^d\setminus\varepsilon^{-1}Q'}(\theta\cdot A^{\rm hom}\theta+1)^{-1}\widehat{f}(\theta)e_\theta\,d\theta,\nonumber
\end{align*}
we obtain
\[
\bigl\Vert({\mathcal A}^\varepsilon+I)^{-1}f-({\mathcal A}^{\rm hom}+I)^{-1}f\bigr\Vert_{L^2({\mathbb R}^d,d\mu^\varepsilon)}\le C\varepsilon\Vert f\Vert_{L^2({\mathbb R}^d,d\mu^\varepsilon)}+\frac{\varepsilon^2}{\bigl\Vert(A^{\rm hom})^{-1}\bigr\Vert^{-1}\pi^2+\varepsilon^2}\bigl\Vert\widehat{f}\bigr\Vert_{L^2({\mathbb R}^d)},
\]
from which the claim follows.
\end{proof}

We now proceed to the proof of Theorem \ref{main_theorem}.
Motivated by formal asymptotics in powers of $\varepsilon,$ we consider the function
\begin{equation}
U_\theta^\varepsilon:=c_\theta+{\rm i}\varepsilon N_j\theta_j c_\theta+\varepsilon^2 R^\varepsilon_\theta,
\label{U_def}
\end{equation}
where $\nabla N_j,$ $j=1,2,\dots, d,$ are defined by (\ref{N_equation}), and the ``remainder'' $R^\varepsilon_\theta\in H^1_\#$ solves
\begin{align}
&-\overline{e_{\varepsilon\theta}}\nabla\cdot A\nabla(e_{\varepsilon\theta}R^\varepsilon_\theta)+\varepsilon^2\int_QR^\varepsilon_\theta=F+\varepsilon^{-2}\overline{e_{\varepsilon\theta}}\nabla\cdot A\nabla(e_{\varepsilon\theta}c_\theta)+{\rm i}\varepsilon^{-1}\overline{e_{\varepsilon\theta}}\nabla\cdot A\nabla(e_{\varepsilon\theta}N_j\theta_j)c_\theta-c_\theta
\nonumber\\[0.5em]
&\equiv F+{\rm i}\nabla\cdot \bigl({\rm i} N_j\theta_jA\theta\bigr)c_\theta+{\rm i}\theta\cdot A\nabla({\rm i} N_j\theta_j)c_\theta
-{\rm i}\varepsilon N_j\theta_j\theta\cdot A\theta c_\theta-\theta\cdot A\theta c_\theta-c_\theta=:H^\varepsilon_\theta,
\label{R_equation}
\end{align}
where $H^\varepsilon_\theta$ is an treated as an element of the space $(H^1_\#)^*.$
Here for all $\varkappa\in Q'$ we set 
\begin{equation*}
\nabla e_\varkappa={\rm i}e_\varkappa\varkappa,\qquad \nabla(e_\varkappa N_j)=e_\varkappa({\rm i}N_j\varkappa+\nabla N_j),\qquad1,2,\dots, d.
\end{equation*}
The second equality in (\ref{R_equation}) is verified by taking $\varphi\in C^\infty_\#,$ noticing that
\begin{align*}
&\Bigl\langle\varepsilon^{-2}\overline{e_{\varepsilon\theta}}\nabla\cdot A\nabla e_{\varepsilon\theta}+{\rm i}\varepsilon^{-1}\overline{e_{\varepsilon\theta}}\nabla\cdot A\nabla(e_{\varepsilon\theta}N_j\theta_j), \varphi\Bigr\rangle
\\[0.5em]
&=
-\int_Q\Bigl(\varepsilon^{-2}A{\rm i}e_{\varepsilon\theta}\varepsilon\theta\cdot{\nabla(e_{\varepsilon\theta}\varphi)}
+{\rm i}\varepsilon^{-1}
Ae_{\varepsilon\theta}\theta_j({\rm i}N_j\varepsilon\theta+\nabla N_j)\cdot{\nabla(e_{\varepsilon\theta}\varphi)}\Bigr)\\[0.5em]
&=-\int_Q\Bigl(\varepsilon^{-2}A{\rm i}e_{\varepsilon\theta}\varepsilon\theta\cdot{({\rm i}e_{\varepsilon\theta}\phi\varepsilon\theta+e_{\varepsilon\theta}\nabla\varphi)}+{\rm i}\varepsilon^{-1}
Ae_{\varepsilon\theta}\theta_j({\rm i}N_j\varepsilon\theta+\nabla N_j)\cdot{({\rm i}e_{\varepsilon\theta}\phi\varepsilon\theta+e_{\varepsilon\theta}\nabla\varphi)}
\Bigr),
\end{align*}
and finally using (\ref{N_equation}). Note that for $c_\theta$ defined by (\ref{c_condition}), the condition $\langle H^\varepsilon_\theta, 1\rangle=0$ holds, and in the case $\theta=0$ the average over $Q$ of the solution $R^\varepsilon_\theta$ to (\ref{R_equation}) vanishes. 

The estimate (\ref{Poincare}) and the classical Riesz representation theorem \cite[p.\,32]{Birman_Solomjak} imply that for each $\varepsilon>0,$ $\theta\in\varepsilon^{-1}Q',$  there exists a unique solution $R^\varepsilon_\theta\in H^1_\#$ to the problem (\ref{R_equation}).

\section{Discussion of the validity of (\ref{Poincare}) for some singular measures}

\label{planes_sec}



Consider a finite set $\{{\mathcal P}_j\}_{j=1}^N$ of hyperplanes of dimension $d$ or smaller each of which 
is parallel some of the Euclidean coordinate axes in ${\mathbb R}^d$ and orthogonal to the complementary coordinate axes
and such that $(\cup_{j=1}^N{\mathcal P}_j)\cap Q$ is non-empty and connected.

 Define the measure $\mu$ on $Q$ by the formula 
\[
\mu(B)=\Bigl(\sum_{j=1}^N\vert {\mathcal P}_j\cap Q\vert_j\Bigr)^{-1}\sum_{j=1}^N\vert {\mathcal P}_j\cap B\vert_j\ \ {\rm for\ all\ Borel\ } B\subset Q.
\]  
where $\vert\cdot\vert_j$ represents the $d_j$-dimensional Lebesgue measure, $d_j={\rm dim}({\mathcal P}_j).$ 


For each $j\in\{1,\dots, N\}$ consider the measure $\mu_j$ defined by
\[
\mu_j(B):=\vert {\mathcal P}_j\cap Q\vert_j^{-1}\vert {\mathcal P}_j\cap B\vert_j\ \ {\rm for\ all\ Borel\ } B\subset Q,
\]


\subsection{Poincar\'{e} inequality for a single hyperplane}

In this section, we fix $j\in\{1,\dots, N\}$ and assume, without loss of generality,
that the plane 
${\mathcal P}_j$ passes through zero. We denote by $Q_j$ the $d_j$-dimensional cross-section of $Q$ by ${\mathcal P}_j,$
 by $\varkappa_j^{||}$ the vector of those components of the quasimomentum $\varkappa$ that correspond to the selection of the coordinates entering ${\mathcal P}_j$ considered as a subspace of ${\mathbb R}^d,$ and by $\varkappa_j^\perp$ the vector of those components of $\varkappa$ that do not enter $\varkappa_j^{||}.$ 

 For a function $\phi\in C_\#^\infty,$ 
 at each point $x\in Q,$ 
we decompose the (classical) gradient $\nabla\phi(x)$ into the orthogonal sum of its projection $\nabla_j^{||}\phi(x)$ onto $\widetilde{\mathcal P}_j$  and its projection $\nabla_j^\perp\phi(x)$ onto the orthogonal complement of $\widetilde{\mathcal P}_j.$ We treat $\nabla_j^{||}\phi(x)$ and $\nabla_j^\perp\phi(x)$ as elements of ${\mathbb R}^{d_j}$ and ${\mathbb R}^{d-d_j},$ respectively. Clearly, for each $\varkappa\in Q',$ 
one has, pointwise in $Q,$
\begin{equation}
\bigl\vert\nabla(e_\kappa\phi)\bigr\vert=\vert{\rm i}\phi\varkappa+\nabla\phi\bigr\vert^2=\vert{\rm i}\phi\varkappa_j^{||}+\nabla^{||}\phi\bigr\vert^2+\vert{\rm i}\phi\varkappa_j^\perp+\nabla^\perp\phi\bigr\vert^2\ge \vert{\rm i}\phi\varkappa_j^{||}+\nabla^{||}\phi\bigr\vert^2,
\label{Eineq}
\end{equation}
where the norms are considered in appropriate Euclidean spaces.

Next, we write 
\[
\phi(\widetilde{x})-\int_Q\phi d\mu_j=\sum_{l\in{\mathbb Z}^{d_j}\setminus\{0\}}c_l\exp(2\pi{\rm i} l\cdot\widetilde{x}), \quad \widetilde{x}\in Q_j,\qquad c_l\in{\mathbb C},\ l\in{\mathbb Z}^{d_j},
\]
and notice that for all $j$ one has, assuming $\phi$ is non-constant on ${\mathcal P}_j\cap Q,$
\begin{align*}
&\biggl(\int_Q\biggl\vert\phi-\int_Q\phi d\mu_j\biggr\vert^2d\mu_j\biggr)^{-1}\int_Q\bigl\vert{\rm i}\phi\varkappa_j^{||}+\nabla^{||}\phi\bigr\vert^2d\mu_j\\[0.5em]
&=\biggl(\sum_{l, m\in{\mathbb Z}^{d_j}\setminus\{0\}}\alpha_{lm}c_l\overline{c_m}\biggr)^{-1}\biggl(\sum_{l, m\in{\mathbb Z}^{d_j}\setminus\{0\}}\alpha_{lm}c_l\overline{c_m}(\varkappa_j^{||}+2\pi l)\cdot(\varkappa_j^{||}+2\pi m)\biggr),
\end{align*}
where
\[
\alpha_{lm}:=\int_{Q_j}\exp\bigl(2\pi{\rm i}(l-m)\cdot\widetilde{x}\bigr)d\mu_j(\widetilde{x})=\left\{\begin{array}{ll}1, \ \ l=m,\\[0.4em]
0\ \ \ {\rm otherwise}.\end{array}\right.
\]
It follows that 
\begin{equation*}
\begin{aligned}
\biggl(\int_Q\biggl\vert\phi-\int_Q\phi d\mu_j\biggr\vert^2d\mu_j\biggr)^{-1}&\int_Q\bigl\vert{\rm i}\phi\varkappa_j^{||}+\nabla^{||}\phi\bigr\vert^2d\mu_j
\\[0.5em]
&=\biggl(\sum_{l\in{\mathbb Z}^{d_j}\setminus\{0\}}|c_l|^2\biggr)^{-1}\biggl(\sum_{l, m\in{\mathbb Z}^{d_j}\setminus\{0\}}
|c_l|^2|\varkappa_j^{||}+2\pi l|^2\biggr)\ge\pi^2,
\end{aligned}
\end{equation*}
or equivalently
\begin{equation}
\int_Q\biggl\vert\phi-\int_Q\phi d\mu_j\biggr\vert^2d\mu_j\le\pi^{-2}\int_Q\bigl\vert{\rm i}\phi\varkappa_j^{||}+\nabla^{||}\phi\bigr\vert^2d\mu_j
\label{lem_scal}
\end{equation}
If the function $\phi$ is constant on ${\mathcal P}_j\cap Q,$ the inequality (\ref{lem_scal}) is satisfied trivially.


\subsection{Connectivity argument}
\label{connect}

For the measure $\mu=\sum_{j=1}^N\mu_j$ and $\phi\in C^\infty_\#,$ we denote by 
$\nabla^{||}(e_\kappa\phi)$ the tangential gradient of $\phi$ at points of 
${\rm supp}(\mu),$ {\it i.e.} the orthogonal projection of $\nabla(e_\kappa\phi)$ onto ${\rm supp}(\mu).$  

Suppose that for $j,k\in\{1,\dots, N\}$ the hyperplanes ${\mathcal P}_j$ and ${\mathcal P}_k$  intersect and fix a point $\alpha_{jk}\in{\mathcal P}_j\cap{\mathcal P}_k\cap Q.$ For any $\kappa\in Q',$ any function 
$\phi\in C^\infty_\#,$ and all $x\in{\mathcal P}_j\cap Q$, $y\in{\mathcal P}_k\cap Q,$ one has
\begin{equation}
\begin{aligned}
e_\kappa(x)\phi(x)-e_\kappa(y)\phi(y)&=\int_{\alpha_{jk}}^x\nabla(e_\kappa\phi)\bigl(\alpha_{jk}+t(x-\alpha_{jk})\bigr)dt\cdot(x-\alpha_{jk})\\[0.5em]
&-\int_{\alpha_{jk}}^y\nabla(e_\kappa\phi)\bigl(\alpha_{jk}+t(y-\alpha_{jk})\bigr)dt\cdot(y-\alpha_{jk}).
\end{aligned}
\label{both_sides}
\end{equation} 
Multiplying both sides of (\ref{both_sides}) by $e_\kappa(y)^{-1}=e_\kappa(-y)$ and integrating over $y\in Q$ with respect to the measure $\mu_k$ (recalling that ${\rm supp}(\mu_k)={\mathcal P}_k\cap Q$) yields
\begin{equation}
e_\kappa(x)\phi(x)\int_Qe_\kappa^{-1}d\mu_k-\int_Q\phi d\mu_k\le 
\sqrt{2}\Bigl(\bigl\Vert\nabla^{||}(e_\kappa\phi)\bigr\Vert_{L^2(Q,d\mu_j)}+\bigl\Vert\nabla^{||}(e_\kappa\phi)\bigr\Vert_{L^2(Q,d\mu_k)}\Bigr)\qquad\forall x\in{\mathcal P}_j\cap Q.
\label{interm_phi}
\end{equation}
Furthermore, multiplying both sides of (\ref{interm_phi}) by $e_\kappa(x)^{-1}$ and integrating over $x\in Q$ with respect to the measure $\mu_j$ yields
\begin{equation*}
\int_Q\phi d\mu_j-\int_Q\phi d\mu_k\le 
\sqrt{2}\Bigl(\bigl\Vert\nabla^{||}(e_\kappa\phi)\bigr\Vert_{L^2(Q,d\mu_j)}+\bigl\Vert\nabla^{||}(e_\kappa\phi)\bigr\Vert_{L^2(Q,d\mu_k)}\Bigr)\le\sqrt{2}\Vert\nabla^{||}(e_\kappa\phi)\bigr\Vert_{L^2(Q,d\mu)}.
\end{equation*}
By interchanging $k$ and $j$ if necessary, we thus obtain 
\[
\biggl\vert\int_Q\phi d\mu_j-\int_Q\phi d\mu_k\biggr\vert\le \sqrt{2}\bigl\Vert\nabla^{||}(e_\kappa\phi)\bigr\Vert_{L^2(Q,d\mu)}.
\]

Next, notice that since $(\cup_{j=1}^N{\mathcal P}_j)\cap Q$ is connected by assumption, for each pair of planes in the union there is a ``path" from one plane to the other involving at most $N$ planes, such that any ``adjacent" planes in the path intersect. It follows that for all pairs $j, k$ the bound 
\begin{equation}
\biggl\vert\int_Q\phi d\mu_j-\int_Q\phi d\mu_k\biggr\vert\le \sqrt{2}N\bigl\Vert\nabla^{||}(e_\kappa\phi)\bigr\Vert_{L^2(Q,d\mu)}.
\label{anyjk}
\end{equation}
 holds.

Finally, using (\ref{anyjk}) and standard arithmetic inequalities, we obtain
\begin{align*}
\int_Q\biggl\vert\phi-\int_Q\phi\biggr\vert^2d\mu&=\sum_{j=1}^N\int_Q\biggl\vert\phi-\int_Q\phi\biggr\vert^2d\mu_j=\sum_{j=1}^N\int_Q\biggl\vert\phi-\sum_{k=1}^NN^{-1}
\int_Q\phi d\mu_k\biggr\vert^2d\mu_j\nonumber\\[0.4em]
&=\sum_{j=1}^N\int_Q\biggl\vert\sum_{k=1}^N
N^{-1}\biggl(\phi-\int_Q\phi d\mu_k\biggr)\biggr\vert^2d\mu_j\le \sum_{j=1}^N\sum_{k=1}^N
N^{-1}\int_Q\biggl\vert\biggl(\phi-\int_Q\phi d\mu_k\biggr)\biggr\vert^2d\mu_j\nonumber\\[0.4em]
&=\sum_{j=1}^N\sum_{k=1}^N
N^{-1}
\int_Q\biggl\vert\phi-\int_Q\phi d\mu_j+
\biggl(\int_Q\phi d\mu_j-\int_Q\phi d\mu_k\biggr)
\biggr\vert^2d\mu_j\nonumber\\[0.4em]
&\le 2\sum_{j=1}^N\sum_{k=1}^N
N^{-1}\biggl\{\int_Q\biggl\vert\phi-\int_Q\phi d\mu_j
\biggr\vert^2d\mu_j+2N^2\Vert\nabla^{||}(e_\kappa\phi)\bigr\Vert^2_{L^2(Q,d\mu)}\biggr\}\nonumber\\[0.4em]
&=2\sum_{j=1}^N\biggl\{\int_Q\biggl\vert\phi-\int_Q\phi d\mu_j
\biggr\vert^2d\mu_j+2N^2\Vert\nabla^{||}(e_\kappa\phi)\bigr\Vert^2_{L^2(Q,d\mu)}\biggr\}\nonumber\\[0.4em]
&\le2\sum_{j=1}^N\biggl\{\pi^{-2}\Vert\nabla^{||}(e_\kappa\phi)\bigr\Vert^2_{L^2(Q,d\mu_j)}+2N^2\Vert\nabla^{||}(e_\kappa\phi)\bigr\Vert^2_{L^2(Q,d\mu)}\biggr\}\nonumber\\[0.4em]
&\le 2N\bigl(\pi^{-2}+2N^2\bigr)\Vert\nabla^{||}(e_\kappa\phi)\bigr\Vert^2_{L^2(Q,d\mu)}.
\end{align*}
Combining the above bound
with (\ref{Eineq}), where we notice that for each $j=1,\dots, N,$ on ${\rm supp}(\mu_j)$ one has 
\[
\nabla^{||}(e_\kappa\phi)=e_\kappa({\rm i}\phi\varkappa_j^{||}+\nabla^{||}\phi),
\]
we obtain 
\begin{equation}
\int_Q\biggl\vert\phi-\int_Q\phi\biggr\vert^2d\mu\le C_{\rm P}\bigl\|\nabla(e_\kappa \phi)\bigr\|^2_{L^2(Q,d\mu)},
\label{phi_fin}
\end{equation}
with 
\begin{equation}
C_{\rm P}=2N\bigl(\pi^{-2}+2N^2\bigr).
\label{CP}
\end{equation}



Finally, approximating an arbitrary $\bigl(e_\varkappa u, \nabla (e_\varkappa u)\bigr)\in H_\varkappa^1$ by pairs $\bigl(e_\varkappa\phi, \nabla (e_\varkappa\phi)\bigr),$ $\phi\in C^\infty_\#,$ in line with
 Definition \ref{Sobolev_definition},
and passing to the limit as $n\to\infty$ in the bound (\ref{phi_fin})
yields the inequality (\ref{Poincare}), with $C_{\rm P}$ given by (\ref{CP}).

\section{Estimate for the ``remainder'' $\varepsilon^2R^\varepsilon_\theta$}
\label{R_est_section}



\begin{theorem}
\label{R_est_lemma}
Suppose that $\theta\neq 0,$ $\varepsilon>0.$ For the solution $R^\varepsilon_\theta$ to the problem (\ref{R_equation}) the following estimates hold with $C>0:$
\begin{equation}
\biggl\Vert R^\varepsilon_\theta-\int_QR^\varepsilon_\theta\biggr\Vert_{L^2(Q)}\le C\Vert F\Vert_{L^2(Q)},\qquad \biggl\vert\int_QR^\varepsilon_\theta\biggr\vert\le
C\varepsilon^{-1}\Vert F\Vert_{L^2(Q)}.
\label{target_est}
\end{equation}
\end{theorem}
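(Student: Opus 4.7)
I would derive both estimates from an energy identity obtained by testing the weak form of (\ref{R_equation}) against $R^\varepsilon_\theta$ itself, combined with a mean identity from testing against the constant $1$, exploiting the cancellation $\langle H^\varepsilon_\theta,1\rangle=0$ throughout. Write $r:=\int_Q R^\varepsilon_\theta\,d\mu$, $\tilde R:=R^\varepsilon_\theta-r$, $\xi:=\varepsilon\theta$, and $B(u,v):=\int_Q A\nabla(e_\xi u)\cdot\overline{\nabla(e_\xi v)}\,d\mu$. The first ingredient is the uniform dual bound $\|H^\varepsilon_\theta\|_{(H^1_\#)^*}\le C\|F\|_{L^2(Q)}$, which follows term-by-term from the explicit formula for $H^\varepsilon_\theta$, using $|c_\theta|(1+|\theta|^2)\le C\|F\|_{L^2(Q)}$ (a consequence of (\ref{c_condition}) and the ellipticity of $A^{\rm hom}$), the bound $\varepsilon|\theta|\le\sqrt d\,\pi$, and the $L^2$-boundedness of $N_j$ and $\nabla N_j$.

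Next, testing the weak form of (\ref{R_equation}) against $\varphi=1$ and using $\langle H^\varepsilon_\theta,1\rangle=0$ yields the mean identity $\varepsilon^2 r=-B(R^\varepsilon_\theta,1)$. Expanding via $R^\varepsilon_\theta=\tilde R+r$ and applying the corrector identity $\int_Q a\,\overline{\partial_j\varphi}\,d\mu=\int_Q A\nabla N_j\cdot\overline{\nabla\varphi}\,d\mu$ (obtained from the weak form of (\ref{N_equation})) to convert the gradient contribution, one derives, after Cauchy--Schwarz and Poincar\'e (\ref{Poincare}), the uniform estimate
\[
\varepsilon|\theta|\,|r|\;\le\;C\,\bigl\|\nabla(e_\xi\tilde R)\bigr\|_{L^2(Q)},
\]
exploiting the elementary bounds $|\xi|^k/(\varepsilon^2+|\xi|^2)\le C$ for $k=2,3$ (recall $|\xi|=\varepsilon|\theta|$ is bounded).

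The heart of the argument is the energy identity obtained by testing against $\varphi=R^\varepsilon_\theta$ and using $\langle H^\varepsilon_\theta,R^\varepsilon_\theta\rangle=\langle H^\varepsilon_\theta,\tilde R\rangle$:
\[
\int_Q A\bigl|\nabla(e_\xi R^\varepsilon_\theta)\bigr|^2\,d\mu\;+\;\varepsilon^2|r|^2\;=\;\langle H^\varepsilon_\theta,\tilde R\rangle.
\]
The right-hand side depends only on $\tilde R$, hence is bounded by $C\|F\|_{L^2(Q)}\|\nabla(e_\xi\tilde R)\|_{L^2(Q)}$ by the dual bound and Poincar\'e. Combining with the triangle inequality $\|\nabla(e_\xi\tilde R)\|_{L^2(Q)}\le\|\nabla(e_\xi R^\varepsilon_\theta)\|_{L^2(Q)}+\varepsilon|\theta|\,|r|$, the bound on $\varepsilon|\theta|\,|r|$ from the previous step, and Young's inequality with weight adapted to the ellipticity constant of $A$, I would close the estimate to obtain $\|\nabla(e_\xi R^\varepsilon_\theta)\|_{L^2(Q)}\le C\|F\|_{L^2(Q)}$ and $\varepsilon|r|\le C\|F\|_{L^2(Q)}$. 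Applying Poincar\'e once more to $\tilde R$ yields $\|\tilde R\|_{L^2(Q)}\le C\|F\|_{L^2(Q)}$, while the bound on $|r|$ is precisely $C\varepsilon^{-1}\|F\|_{L^2(Q)}$, giving both estimates in (\ref{target_est}).

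The main obstacle is closing the coupled estimates on $\tilde R$ and $r$: a direct substitution loses an $\varepsilon^{-1}$ factor. The resolution exploits $\langle H^\varepsilon_\theta,1\rangle=0$ twice---once to ensure the energy identity involves only the mean-zero part $\tilde R$ on the right-hand side, and once via the corrector identity (essentially the weak form of the unit-cell problem (\ref{N_equation})) to convert the $\varepsilon^{-1}$ arising from the mean equation into the bounded ratio $|\xi|^k/(\varepsilon^2+|\xi|^2)$.
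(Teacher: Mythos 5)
Your overall strategy---test (\ref{R_equation}) against $R^\varepsilon_\theta$ and against $1$, exploit $\langle H^\varepsilon_\theta,1\rangle=0$, and combine an energy identity with a mean identity---shares several correct ingredients with the paper's argument: the energy identity (\ref{penultimate}), the dual bound $\|H^\varepsilon_\theta\|_{(H^1_\#)^*}\le C\|F\|_{L^2(Q)}$ (which indeed follows from $|c_\theta|(1+|\theta|^2)\le C\|F\|$ and the boundedness of $\varepsilon|\theta|$), and the mean identity, which after expansion yields $r\bigl(\varepsilon^2+|\xi|^2\int_QA\bigr)=-{\rm i}\int_QA\nabla(e_\xi\tilde R)\cdot\xi\overline{e_\xi}$, $\xi=\varepsilon\theta$, whence $\varepsilon|\theta|\,|r|\le C_1\|\nabla(e_\xi\tilde R)\|$ with $C_1=\|A\|_\infty/\int_QA$. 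But the closure step does not work: the energy identity controls $\|\nabla(e_\xi R^\varepsilon_\theta)\|$ on the left, while the duality bound produces $\|\nabla(e_\xi\tilde R)\|$ on the right, and these differ by exactly $|\xi|\,|r|$. Your plan is to use the triangle inequality $\|\nabla(e_\xi\tilde R)\|\le\|\nabla(e_\xi R^\varepsilon_\theta)\|+|\xi||r|$ together with $|\xi||r|\le C_1\|\nabla(e_\xi\tilde R)\|$ and then absorb; but this requires $C_1<1$, and $C_1=\|A\|_\infty/\int_QA\ge1$ always (with equality only for constant $A$), so no choice of Young weight can save the absorption. One sees the failure directly by eliminating $|r|$: substituting the mean identity into the energy identity gives $\frac{\varepsilon^2}{\varepsilon^2+|\xi|^2\int_QA}\int_QA|\nabla(e_\xi\tilde R)|^2\le C\|F\|\,\|\nabla(e_\xi\tilde R)\|$, so the resulting bound on $\|\nabla(e_\xi\tilde R)\|$ degenerates like $1+|\theta|^2$, which is unbounded over $\theta\in\varepsilon^{-1}Q'$.

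The paper's proof avoids this by \emph{not} bounding the dangerous gradient term $c_\theta\int_Qe_\xi N_j\theta_j\theta\cdot\overline{A\nabla(e_\xi\tilde R)}$ by Cauchy--Schwarz. Instead it introduces an auxiliary solution $\Phi^\varepsilon_\theta$ to the companion problem (\ref{Phi_eq0}) (with the same operator and with $-\overline{e_\xi}\nabla\cdot(e_\xi N_j\theta_jA\theta)c_\theta$ on the right), so that this term becomes $\int_QA\nabla(e_\xi\Phi^\varepsilon_\theta)\cdot\overline{\nabla(e_\xi\tilde R)}$. Then, by the symmetry of the form, plugging $\Phi^\varepsilon_\theta$ (with its canonically defined gradient, cf.\ (\ref{grad_Phi})) as a test function in (\ref{R_equation}) converts this into $\overline{\langle H^\varepsilon_\theta,\Phi^\varepsilon_\theta-\int_Q\Phi^\varepsilon_\theta\rangle}$; this is controlled uniformly because $\Phi^\varepsilon_\theta$ satisfies the \emph{a priori} estimate (\ref{gradPhi_est}), and the term $\int_QA\nabla(e_\xi\Phi^\varepsilon_\theta)\cdot\overline{\nabla e_\xi}+\varepsilon^2\int_Q\Phi^\varepsilon_\theta$ vanishes thanks to $\int_QAN_j=0$. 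This duality manoeuvre is the missing idea in your proposal: without it the coupled system for $\tilde R$ and $r$ cannot be closed uniformly in $\theta$.
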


\begin{proof} 

Consider a sequence of functions $\phi_n\in C_\#^\infty$ that converges in $L^2(Q)$ to $R^\varepsilon_\theta,$ such that 
\[
\nabla(e_{\varepsilon\theta}\phi_n)\stackrel{[L^2(Q)]^d}{\longrightarrow}\nabla\bigl(e_{\varepsilon\theta}R^\varepsilon_\theta\bigr),
\]
and, equivalently,
\[
\nabla\biggl[e_{\varepsilon\theta}\biggl(\phi_n-\int_QR^\varepsilon_\theta\biggr)\biggr]\stackrel{[L^2(Q)]^d}{\longrightarrow}\nabla\biggl[e_{\varepsilon\theta}\biggl(R^\varepsilon_\theta-\int_QR^\varepsilon_\theta\biggr)\biggr].
\]
 It follows from (\ref{R_equation}) that
\[
\int_QA\nabla(e_{\varepsilon\theta}R^\varepsilon_\theta)\cdot {\nabla(e_{\varepsilon\theta}\phi_n)}+\varepsilon^2\int_QR^\varepsilon_\theta\overline{\int_Q\phi_n}=\bigl\langle H^\varepsilon_\theta, 1\bigr\rangle\overline{\int_QR^\varepsilon_\theta}+\biggl\langle H^\varepsilon_\theta, \phi_n-\int_QR^\varepsilon_\theta\biggr\rangle.
\]
Furthermore, using the fact that $\langle H^\varepsilon_\theta, 1\rangle=0$ and the formula
\[
\nabla\phi_n=\overline{e_{\varepsilon\theta}}
\Biggl\{\nabla\biggl[e_{\varepsilon\theta}\biggl(\phi_n-\int_QR^\varepsilon_\theta\biggr)\biggr]-\biggl(\phi_n-\int_QR^\varepsilon_\theta\biggr)\nabla e_{\varepsilon\theta}\Biggr\},
\]
where all the gradients are understood in the classical sense, we obtain
\begin{align*}
\int_QA\nabla(e_{\varepsilon\theta}R^\varepsilon_\theta)&\cdot {\nabla(e_{\varepsilon\theta}\phi_n)}+\varepsilon^2\int_QR^\varepsilon_\theta\overline{\int_Q\phi_n}=
\int_Q\bigl(F+{\rm i}\theta\cdot A\nabla({\rm i} N_j\theta_j)c_\theta
\nonumber\\[0.5em]
&
-{\rm i}\varepsilon N_j\theta_j\theta\cdot A\theta c_\theta-\theta\cdot A\theta c_\theta-c_\theta\bigr)\overline{\biggl(\phi_n-\int_QR^\varepsilon_\theta\biggr)}\nonumber\\[0.5em]
&+c_\theta\int_Qe_{\varepsilon\theta}N_j\theta_jA\theta\cdot{\Biggl\{\nabla\biggl[e_{\varepsilon\theta}\biggl(\phi_n-\int_QR^\varepsilon_\theta\biggr)\biggr]-\biggl(\phi_n-\int_QR^\varepsilon_\theta\biggr)\nabla e_{\varepsilon\theta}\Biggr\}}.
\end{align*}
Passing to the limit as $n\to\infty$ yields
\begin{align}
&\int_QA\nabla(e_{\varepsilon\theta}R^\varepsilon_\theta)\cdot{\nabla(e_{\varepsilon\theta}R^\varepsilon_\theta)}+\varepsilon^2\biggl\vert\int_QR^\varepsilon_\theta\biggr\vert^2=
\int_Q\biggl[F-c_\theta\Bigl(\theta\cdot A\nabla(N_j\theta_j)+({\rm i}\varepsilon N_j\theta_j+1)\theta\cdot A\theta
\nonumber\\[0.5em]
&+e_{\varepsilon\theta}N_j\theta_j\theta\cdot{A\nabla e_{\varepsilon\theta}}+1\Bigr)\biggr]\overline{\biggl(R^\varepsilon_\theta-\int_QR^\varepsilon_\theta\biggr)}
+c_\theta\int_Q e_{\varepsilon\theta}N_j\theta_j\theta\cdot{A\nabla\biggl\{e_{\varepsilon\theta}\biggl(R^\varepsilon_\theta-\int_QR^\varepsilon_\theta\biggr)\biggr\}}.
\label{penultimate}
\end{align}

Consider the solution $\Phi^\varepsilon_\theta\in H^1_\#$ to the problem
\begin{equation}
-\overline{e_{\varepsilon\theta}}\nabla\cdot A\nabla(e_{\varepsilon\theta}\Phi^\varepsilon_\theta)+\varepsilon^2\int_Q\Phi^\varepsilon_\theta
=-\overline{e_{\varepsilon\theta}}\nabla\cdot\bigl(e_{\varepsilon\theta}N_j\theta_jA\theta\bigr)c_\theta,
\label{Phi_eq0}
\end{equation}
so that for the last term in (\ref{penultimate}) we obtain 
\begin{equation}
c_\theta\int_Q e_{\varepsilon\theta}N_j\theta_j\theta\cdot{A\nabla\biggl\{e_{\varepsilon\theta}\biggl(R^\varepsilon_\theta-\int_QR^\varepsilon_\theta\biggr)\biggr\}}=\int_QA\nabla(e_{\varepsilon\theta}\Phi^\varepsilon_\theta)\cdot{\nabla\biggl\{e_{\varepsilon\theta}\biggl(R^\varepsilon_\theta-\int_QR^\varepsilon_\theta\biggr)\biggr\}}.
\label{interm}
\end{equation}
In what follows we use the uniform estimate 
\begin{equation}
\bigl\Vert\sqrt{A}\nabla(e_{\varepsilon\theta}\Phi^\varepsilon_\theta)\bigr\Vert_{[L^2(Q)]^d}\le C\Vert F\Vert_{L^2(Q)},
\label{gradPhi_est}
\end{equation}
which is obtained by using $\Phi^\varepsilon_\theta$ as a test function in the integral formulation of (\ref{Phi_eq0}).

We would like to rewrite the expression on the right-hand side of (\ref{interm}) using $\Phi^\varepsilon_\theta$ as a test function in the integral identity for (\ref{R_equation}). Recall that the gradient of an arbitrary function in $H^1_\#,$ for a general measure $\mu,$ is not defined in a unique way.
However, for the solution $\Phi^\varepsilon_\theta$ to (\ref{Phi_eq0}) there exists a natural choice of the gradient $\nabla\Phi^\varepsilon_\theta,$ dictated by (\ref{Phi_eq0}).
Indeed, consider sequences $\phi_n,$ $\psi_n\in C^\infty_\#$ converging to $\Phi^\varepsilon_\theta$ in $L^2(Q)$ so that
\[
\nabla(e_{\varepsilon\theta}\phi_n)\stackrel{[L^2(Q)]^d}{\longrightarrow}\nabla\bigl(e_{\varepsilon\theta}\Phi^\varepsilon_\theta\bigr),\qquad\nabla(e_{\varepsilon\theta}\psi_n)\stackrel{[L^2(Q)]^d}{\longrightarrow}\nabla\bigl(e_{\varepsilon\theta}\Phi^\varepsilon_\theta\bigr).
\]
Clearly, the difference $\nabla(e_{\varepsilon\theta}\phi_n)-\nabla(e_{\varepsilon\theta}\psi_n)$ converges to zero, and hence so does $\nabla\phi_n-\nabla\psi_n.$ In what follows we denote by $\nabla\Phi^\varepsilon_\theta$ the common $L^2$-limit of gradients $\nabla\phi_n$ for sequences $\phi_n\in C^\infty_\#$ with the above properties.
Passing to the limit, as $n\to\infty,$ in the identity 
$\nabla\phi_n=\overline{e_{\varepsilon\theta}}\bigl(\nabla(e_{\varepsilon\theta}\phi_n)-{\rm i}\varepsilon\phi_n\theta\bigr),$
we obtain the formula
\begin{equation}
\nabla\Phi^\varepsilon_\theta=\overline{e_{\varepsilon\theta}}\bigl(\nabla(e_{\varepsilon\theta}\Phi^\varepsilon_\theta)-{\rm i}\varepsilon\Phi^\varepsilon_\theta\theta\bigr).
\label{grad_Phi}
\end{equation}

The unique choice of $\nabla\Phi^\varepsilon_\theta,$ as above, allows us to write
\[
\int_QA\nabla(e_{\varepsilon\theta}R^\varepsilon_\theta)\cdot{\nabla(e_{\varepsilon\theta}\Phi^\varepsilon_\theta)}+\varepsilon^2\int_QR^\varepsilon_\theta\overline{\int_Q\Phi^\varepsilon_\theta}=\bigl\langle H^\varepsilon_\theta,\Phi^\varepsilon_\theta\bigr\rangle\equiv
\biggl\langle H^\varepsilon_\theta,\Phi^\varepsilon_\theta-\int_Q\Phi^\varepsilon_\theta\biggr\rangle,
\]
so that 
\begin{align}
\int_QA\nabla(e_{\varepsilon\theta}\Phi^\varepsilon_\theta)&\cdot{\nabla\biggl\{e_{\varepsilon\theta}\biggl(R^\varepsilon_\theta-\int_QR^\varepsilon_\theta\biggr)\biggr\}}=\overline{\biggl\langle H^\varepsilon_\theta,\Phi^\varepsilon_\theta-\int_Q\Phi^\varepsilon_\theta\biggr\rangle}\nonumber\\[0.5em]
&-\overline{\int_QR^\varepsilon_\theta}\biggl(\int_QA\nabla(e_{\varepsilon\theta}\Phi^\varepsilon_\theta)\cdot{\nabla e_{\varepsilon\theta}}+\varepsilon^2\int_Q\Phi^\varepsilon_\theta\biggr)=\overline{\biggl\langle H^\varepsilon_\theta,\Phi^\varepsilon_\theta-\int_Q\Phi^\varepsilon_\theta\biggr\rangle},
\label{Phi_eq}
\end{align}
where the values of the functional $H^\varepsilon_\theta$ are chosen accordingly. In the last equality in (\ref{Phi_eq}) we use the fact that 
\[
\int_QA\nabla(e_{\varepsilon\theta}\Phi^\varepsilon_\theta)\cdot{\nabla e_{\varepsilon\theta}}+\varepsilon^2\int_Q\Phi^\varepsilon_\theta=0,
\]
by setting the unity as a test function in the integral formulation of (\ref{Phi_eq0}) and recalling that ({\it cf.} (\ref{N_equation}))
\[
\int_Q AN_j=0,\qquad j=1,2,\dots, d.
\]

Combining (\ref{penultimate}), (\ref{interm}) and (\ref{Phi_eq}) yields 
\begin{align}
&\int_QA\nabla(e_{\varepsilon\theta}R^\varepsilon_\theta)\cdot{\nabla(e_{\varepsilon\theta}R^\varepsilon_\theta)}+\varepsilon^2\biggl\vert\int_QR^\varepsilon_\theta\biggr\vert^2=
\int_Q\biggl[F-c_\theta\Bigl(\theta\cdot A\nabla(N_j\theta_j)+({\rm i}\varepsilon N_j\theta_j+1)\theta\cdot A\theta
\nonumber\\[0.5em]
&+e_{\varepsilon\theta}N_j\theta_j\theta\cdot{A\nabla e_{\varepsilon\theta}}+1\Bigr)\biggr]\overline{\biggl(R^\varepsilon_\theta-\int_QR^\varepsilon_\theta\biggr)}
+\overline{\biggl\langle H^\varepsilon_\theta,\Phi^\varepsilon_\theta-\int_Q\Phi^\varepsilon_\theta\biggr\rangle}.
\label{ultimate}
\end{align}

\begin{lemma} 
The last term on the right-hand side of (\ref{ultimate}) is bounded, uniformly in $\varepsilon,$ $\theta:$
\[
\Biggl\vert\biggl\langle H^\varepsilon_\theta,\Phi^\varepsilon_\theta-\int_Q\Phi^\varepsilon_\theta\biggr\rangle\Biggr\vert\le C\Vert F\Vert_{L^2(Q)},\qquad C>0.
\]
\end{lemma}
\begin{proof}
Notice that 
\begin{align}
\biggl\langle H^\varepsilon_\theta, \Phi^\varepsilon_\theta-\int_Q\Phi^\varepsilon_\theta\biggr\rangle&=
\int_Q\Bigl(F+{\rm i}\nabla\cdot \bigl({\rm i} N_j\theta_jA\theta\bigr)c_\theta+{\rm i}\theta\cdot A\nabla({\rm i} N_j\theta_j)c_\theta
\nonumber\\[0.5em]
&-{\rm i}\varepsilon N_j\theta_j\theta\cdot A\theta c_\theta-\theta\cdot A\theta c_\theta-c_\theta\Bigr)\biggl(\Phi^\varepsilon_\theta-\int_Q\Phi^\varepsilon_\theta\biggr)+c_\theta\int_QN_j\theta_jA\theta\cdot\nabla\Phi^\varepsilon_\theta,
\label{H_test}
\end{align}
where the second term is re-written using (\ref{grad_Phi}):
\[
\int_QN_j\theta_jA\theta\cdot\nabla\Phi^\varepsilon_\theta=\int_Q\overline{e_{\varepsilon\theta}}N_j\theta_jA\theta\cdot\nabla(e_{\varepsilon\theta}\Phi^\varepsilon_\theta)-{\rm i}\varepsilon\int_Q\overline{e_{\varepsilon\theta}}N_j\theta_jA\theta\cdot\theta\biggl(\Phi^\varepsilon_\theta-\int_Q\Phi^\varepsilon_\theta\biggr).
\]
Applying the H\"{o}lder inequality to both terms on the right-hand side of (\ref{H_test}), using the Poincar\'{e} inequality (\ref{Poincare}) for $\Phi^\varepsilon_\theta,$ and taking into the account the bound (\ref{gradPhi_est}) yields the required estimate.
\end{proof}

Combining the above lemma, the Poincar\'{e} inequality (\ref{Poincare}) for $R^\varepsilon_\theta$
and H\"{o}lder inequality for the first term on the right-hand side of (\ref{ultimate}), we obtain the uniform bound
\begin{equation}
\bigl\Vert\sqrt{A}\nabla(e_{\varepsilon\theta}R^\varepsilon_\theta)\bigr\Vert_{[L^2(Q)]^d}\le C\Vert F\Vert_{L^2(Q)}.
\label{grad_est}
\end{equation}
 
 Finally, the bound (\ref{grad_est}) combined with (\ref{Poincare})
 implies the first estimate in (\ref{target_est}), whereas the same bound and equation (\ref{ultimate}) implies the second estimate in (\ref{target_est}).
 This completes the proof of the theorem.
\end{proof}

Note that in the case $\theta=0$ the equality (\ref{penultimate}) takes the form
\begin{equation}
\int_QA\nabla R^\varepsilon_0\cdot{\nabla R^\varepsilon_0}
+\varepsilon^2\biggl\vert\int_QR^\varepsilon_0\biggr\vert^2=\int_QF\overline{R^\varepsilon_0},
\label{kappa0}
\end{equation}
and taking into account (\ref{Poincare}) with $\varkappa=0,$ we obtain 
\begin{equation*}
\bigl\Vert\nabla R^\varepsilon_0\bigr\Vert_{[L^2(Q)]^d}\le C\Vert F\Vert_{L^2(Q)}.
\end{equation*}
The last estimate implies the first bound in (\ref{target_est}) by (\ref{Poincare}) with $\varkappa=0$ and the second bound in (\ref{target_est}) by using (\ref{kappa0}) once again.

\begin{corollary}
\label{cor}
The following estimate holds uniformly in $\varepsilon>0,$ $\theta\in\varepsilon^{-1}Q',$ $F\in L^2(Q):$
\[
\Vert U_\theta^\varepsilon-c_\theta\Vert_{L^2(Q)}\le C\varepsilon\Vert F\Vert_{L^2(Q)}.
\]
\end{corollary}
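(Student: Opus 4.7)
The plan is to combine the definition (\ref{U_def}) of $U_\theta^\varepsilon$ with Theorem \ref{R_est_lemma} and a direct estimate on the corrector term. Writing
\[
U_\theta^\varepsilon - c_\theta = {\rm i}\varepsilon N_j\theta_j c_\theta + \varepsilon^2 R^\varepsilon_\theta
\]
and applying the triangle inequality, the proof reduces to bounding each of the two pieces by $C\varepsilon\Vert F\Vert_{L^2(Q)}$ uniformly in $\varepsilon>0$ and $\theta\in\varepsilon^{-1}Q'.$

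For the remainder $\varepsilon^2 R^\varepsilon_\theta,$ the two bounds in (\ref{target_est}) combined with $\mu(Q)=1$ yield
\[
\bigl\Vert R^\varepsilon_\theta\bigr\Vert_{L^2(Q)} \le \biggl\Vert R^\varepsilon_\theta - \int_Q R^\varepsilon_\theta\biggr\Vert_{L^2(Q)} + \biggl\vert\int_Q R^\varepsilon_\theta\biggr\vert \le C\varepsilon^{-1}\Vert F\Vert_{L^2(Q)}
\]
for $\theta\neq 0;$ multiplying by $\varepsilon^2$ supplies the desired estimate. The case $\theta=0$ is covered explicitly in the paragraph following the proof of Theorem \ref{R_est_lemma}.

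For the corrector term, Cauchy--Schwarz together with $\mu(Q)=1$ gives $\vert c_\theta\vert \le (\theta\cdot A^{\rm hom}\theta + 1)^{-1}\Vert F\Vert_{L^2(Q)}.$ I then exploit ellipticity of $A^{\rm hom}$ to control the ratio. In the scalar case $A=aI,$ testing the equation for $N_i$ in (\ref{N_equation}) against $N_j$ yields the symmetric representation $(A^{\rm hom})_{ij}=\int_Q a\delta_{ij}+\int_Q a\nabla N_i\cdot\nabla N_j,$ hence $\theta\cdot A^{\rm hom}\theta \ge \lambda\vert\theta\vert^2$ with $\lambda:=\operatorname{ess\,inf}a>0$ (finite by the assumption $A^{-1}\in [L^\infty]^{d\times d}$). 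Consequently
\[
\varepsilon\vert\theta\vert\vert c_\theta\vert \le \varepsilon\Vert F\Vert_{L^2(Q)}\sup_{t\ge 0}\frac{t}{\lambda t^2 + 1} = \frac{\varepsilon}{2\sqrt{\lambda}}\Vert F\Vert_{L^2(Q)},
\]
and the corrector is bounded by $\Vert{\rm i}\varepsilon N_j\theta_j c_\theta\Vert_{L^2(Q)} \le \varepsilon\vert\theta\vert\vert c_\theta\vert\max_j\Vert N_j\Vert_{L^2(Q)} \le C\varepsilon\Vert F\Vert_{L^2(Q)},$ since $\max_j\Vert N_j\Vert_{L^2(Q)}$ is a fixed finite constant coming from (\ref{Poincare_standard}) applied to $N_j\in H^1_\#.$

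The only delicate point is the uniformity of the supremum over the $\varepsilon$-dependent set $\varepsilon^{-1}Q':$ the naive bound $\vert\theta\vert\lesssim \varepsilon^{-1}$ alone would leave an $O(1)$ contribution in the corrector. The saving is precisely the decay of $\vert c_\theta\vert$ like $(1+\vert\theta\vert^2)^{-1},$ which cancels the factor $\vert\theta\vert$ coming from the corrector and secures the claimed $O(\varepsilon)$ bound. The remaining steps are straightforward applications of the triangle inequality and the bounds already established, so I do not anticipate any further obstacle.
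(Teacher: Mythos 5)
Your proof is correct and follows the only sensible route: the paper itself states Corollary~\ref{cor} without proof, treating it as an immediate consequence of the definition (\ref{U_def}) of $U^\varepsilon_\theta$ and Theorem~\ref{R_est_lemma}, and your triangle-inequality decomposition into the corrector piece ${\rm i}\varepsilon N_j\theta_j c_\theta$ and the remainder $\varepsilon^2 R^\varepsilon_\theta$ is exactly that consequence worked out. You have correctly identified the one nontrivial point (which the paper leaves implicit), namely that the naive bound $\vert\theta\vert\lesssim\varepsilon^{-1}$ alone does not close the estimate, and that one must use the quadratic decay of $\vert c_\theta\vert$ in $\vert\theta\vert$, which in turn requires a positive lower bound $\theta\cdot A^{\rm hom}\theta\ge\lambda\vert\theta\vert^2.$ Your verification of this via the symmetric representation $(A^{\rm hom})_{ij}=\int_Q a\,\delta_{ij}+\int_Q a\,\nabla N_i\cdot\nabla N_j$ (obtained by testing (\ref{N_equation}) for $N_j$ against $N_i$) is valid, and is in fact necessary here because for a general singular measure $\mu$ the usual Lebesgue-measure shortcut $\int_Q\nabla(N_j\theta_j)\,d\mu=0$ is not available; the symmetric form you use bypasses that issue cleanly. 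Two cosmetic remarks: when you pass from the two bounds in (\ref{target_est}) to $\varepsilon^2\Vert R^\varepsilon_\theta\Vert\le C\varepsilon\Vert F\Vert$ you implicitly use $\varepsilon\le1$ (harmless in this asymptotic regime, but worth saying), and $\Vert N_j\theta_j\Vert_{L^2(Q)}\le\vert\theta\vert\,\Vert N\Vert_{[L^2(Q)]^d}$ is the cleaner statement of the bound you apply to the corrector.
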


\section{Conclusion of the convergence estimate (\ref{main_est})}
\label{conclusion}

Here we estimate the error incurred by using the approximation $U_\theta^\varepsilon$ in (\ref{strong_form}).
\begin{proposition}
\label{z_est_prop}
The difference $z^\varepsilon_\theta:=u_\theta^\varepsilon-U_\theta^\varepsilon$ satisfies the estimate
\[
\Vert z^\varepsilon_\theta\Vert_{L^2(Q)}\le C\varepsilon\Vert F\Vert_{L^2(Q)},\quad C>0,\qquad\forall\ \varepsilon>0,\ \theta\in\varepsilon^{-1}Q',\ F\in L^2(Q).
\]
\end{proposition}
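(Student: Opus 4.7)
The plan is to view $U^\varepsilon_\theta$ as an approximate solution to (\ref{strong_form}), compute the residual it leaves, and then control $z^\varepsilon_\theta$ by the standard energy estimate for the fibre bilinear form. Since $U^\varepsilon_\theta$ has been designed so that the singular-in-$\varepsilon$ contributions produced by $c_\theta$ and $N_j\theta_j c_\theta$ are absorbed by $R^\varepsilon_\theta$ via (\ref{R_equation}), I expect the residual to be an $L^2$-small perturbation of order $O(\varepsilon),$ after which coercivity of the bilinear form will give the desired bound directly.

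First I substitute $U^\varepsilon_\theta=c_\theta+{\rm i}\varepsilon N_j\theta_j c_\theta+\varepsilon^2 R^\varepsilon_\theta$ into the fibre weak form, test against an arbitrary $\varphi\in C_\#^\infty,$ and expand $\nabla(e_{\varepsilon\theta}U^\varepsilon_\theta)$ by linearity. Applying the weak form of (\ref{R_equation}) to the resulting term containing $\nabla(e_{\varepsilon\theta}R^\varepsilon_\theta)$ replaces it by $\langle H^\varepsilon_\theta,\varphi\rangle-\varepsilon^2\bigl(\int_Q R^\varepsilon_\theta\bigr)\overline{\int_Q\varphi}.$ Using the explicit definition of $H^\varepsilon_\theta$ from (\ref{R_equation}), and interpreting the singular parts of $\overline{e_{\varepsilon\theta}}\nabla\cdot A\nabla(e_{\varepsilon\theta}\,\cdot\,)$ as the duality pairings $-\int_Q A\nabla(e_{\varepsilon\theta}\,\cdot\,)\cdot\overline{\nabla(e_{\varepsilon\theta}\varphi)}$, produces two terms that exactly cancel the $\varepsilon^{-2}$ and $\varepsilon^{-1}$ contributions coming from the $c_\theta$ and ${\rm i}\varepsilon N_j\theta_j c_\theta$ parts of $U^\varepsilon_\theta.$ What remains, after combining with $\int_Q U^\varepsilon_\theta\overline{\varphi},$ is
\[
\varepsilon^{-2}\int_Q A\nabla(e_{\varepsilon\theta}U^\varepsilon_\theta)\cdot\overline{\nabla(e_{\varepsilon\theta}\varphi)}+\int_Q U^\varepsilon_\theta\overline{\varphi}=\int_Q(F+G^\varepsilon_\theta)\overline{\varphi},\qquad G^\varepsilon_\theta:={\rm i}\varepsilon N_j\theta_j c_\theta+\varepsilon^2\biggl(R^\varepsilon_\theta-\int_Q R^\varepsilon_\theta\biggr),
\]
the mean correction $-\int_Q R^\varepsilon_\theta$ arising precisely from the constant-in-space term $\varepsilon^2\int_Q R^\varepsilon_\theta$ on the left-hand side of (\ref{R_equation}).

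Subtracting the weak identity (\ref{fibre_id}) for $u^\varepsilon_\theta$ from the identity above, and observing that $U^\varepsilon_\theta$ (like $u^\varepsilon_\theta$) is the first component of an element of $H^1_{\varepsilon\theta}$ because $c_\theta$ is constant and $N_j,R^\varepsilon_\theta\in H^1_\#,$ the difference $z^\varepsilon_\theta$ satisfies
\[
\varepsilon^{-2}\int_Q A\nabla(e_{\varepsilon\theta}z^\varepsilon_\theta)\cdot\overline{\nabla(e_{\varepsilon\theta}\varphi)}+\int_Q z^\varepsilon_\theta\overline{\varphi}=-\int_Q G^\varepsilon_\theta\overline{\varphi}\qquad\forall\varphi\in C_\#^\infty,
\]
and hence, by density, for $\varphi=z^\varepsilon_\theta$ itself. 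Choosing this test function, discarding the non-negative gradient term on the left, and applying Cauchy--Schwarz on the right yields the energy estimate
\[
\bigl\Vert z^\varepsilon_\theta\bigr\Vert_{L^2(Q)}\le\bigl\Vert G^\varepsilon_\theta\bigr\Vert_{L^2(Q)}.
\]

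It remains to bound $\Vert G^\varepsilon_\theta\Vert_{L^2(Q)}$ by $C\varepsilon\Vert F\Vert_{L^2(Q)}.$ The second piece of $G^\varepsilon_\theta$ is immediately controlled by $C\varepsilon^2\Vert F\Vert_{L^2(Q)}$ via the first estimate in (\ref{target_est}) of Theorem \ref{R_est_lemma}. For the first piece I invoke (\ref{c_condition}): since $A^{\rm hom}:=\int_Q A(\nabla N+I)$ is positive definite, the scalar function $\vert\theta\vert/(\theta\cdot A^{\rm hom}\theta+1)$ is bounded uniformly for $\theta\in{\mathbb R}^d,$ while $\bigl\vert\int_Q F\bigr\vert\le\Vert F\Vert_{L^2(Q)}$ by Cauchy--Schwarz together with $\mu(Q)=1.$ Hence $\vert\theta c_\theta\vert\le C\Vert F\Vert_{L^2(Q)}$ uniformly in $\theta\in\varepsilon^{-1}Q',$ and together with the fixed norm $\Vert N\Vert_{[L^2(Q)]^d}$ this gives $\bigl\Vert{\rm i}\varepsilon N_j\theta_j c_\theta\bigr\Vert_{L^2(Q)}\le C\varepsilon\Vert F\Vert_{L^2(Q)}.$ The main obstacle in the whole argument is the algebraic bookkeeping needed to verify the cancellation identified in the second paragraph; given (\ref{R_equation}) and the explicit form of $H^\varepsilon_\theta$ it is a routine calculation, with all remaining ingredients being the standard coercivity argument and term-by-term $L^2$ estimates.
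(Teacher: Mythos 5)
Your proposal is correct and follows essentially the same route as the paper: you derive the residual equation for $z^\varepsilon_\theta$ (your display for $z^\varepsilon_\theta$ with right-hand side $-G^\varepsilon_\theta$ is precisely equation (\ref{z_eq})), test with $z^\varepsilon_\theta$, drop the coercive gradient term, and bound the two pieces of the residual using the boundedness of $\vert\theta\vert\,\vert c_\theta\vert$ from (\ref{c_condition}) and the remainder estimate from Theorem \ref{R_est_lemma}. The only cosmetic difference is that you invoke the already-packaged first bound in (\ref{target_est}) for $\Vert R^\varepsilon_\theta-\int_Q R^\varepsilon_\theta\Vert_{L^2(Q)}$, whereas the paper re-applies the Poincar\'e inequality (\ref{Poincare}) together with (\ref{grad_est}) at that point; these are the same estimate.
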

\begin{proof}
It follows from (\ref{strong_form}), (\ref{U_def}), (\ref{c_condition}), (\ref{N_equation}), (\ref{R_equation}), by a direct calculation, that 
\begin{equation}
-\varepsilon^{-2}\overline{e_{\varepsilon\theta}}\nabla\cdot A\nabla(e_{\varepsilon\theta}z^\varepsilon_\theta)+z^\varepsilon_\theta=-{\rm i}\varepsilon N_j\theta_jc_\theta-\varepsilon^2\biggl(R^\varepsilon_\theta-\int_QR^\varepsilon_\theta\biggr).
\label{z_eq}
\end{equation}
In particular, using $z^\varepsilon_\theta$ as a test function in (\ref{z_eq}), we obtain
\[
\varepsilon^{-2}\int A\nabla(e_{\varepsilon\theta}z^\varepsilon_\theta)\cdot{\nabla e_{\varepsilon\theta}z^\varepsilon_\theta}+\int_Q\vert z^\varepsilon_\theta\vert^2=-{\rm i}\varepsilon c_\theta\theta_j\int_QN_j\overline{z^\varepsilon_\theta}-\varepsilon^2\int_Q\biggl(R^\varepsilon_\theta-\int_QR^\varepsilon_\theta\biggr)\overline{z^\varepsilon_\theta},
\]
and hence
\begin{align*}
\Vert z^\varepsilon_\theta\Vert_{L^2(Q)}^2&\le\varepsilon\vert c_\theta\vert\vert\theta\vert\Vert N\Vert_{[L^2(Q)]^d}\Vert z^\varepsilon_\theta\Vert_{L^2(Q)}+\varepsilon^2C_{\rm P}\bigl\Vert\nabla(e_{\varepsilon\theta}R^\varepsilon_\theta)\bigr\Vert_{[L^2(Q)]^d}\Vert z^\varepsilon_\theta\Vert_{L^2(Q)}\nonumber\\[0.5em]
&\le \varepsilon\Bigl(\vert c_\theta\vert\vert\theta\vert\Vert N\Vert_{[L^2(Q)]^d}+\varepsilon C\bigl\Vert\sqrt{A}\nabla(e_{\varepsilon\theta}R^\varepsilon_\theta)\bigr\Vert_{[L^2(Q)]^d}\Bigr)\Vert z^\varepsilon_\theta\Vert_{L^2(Q)},
\end{align*}
where we use Proposition \ref{Poincare} once again and the fact that $A$ is uniformly positive. The claim follows, by virtue of the formula (\ref{c_condition}) and the estimate (\ref{grad_est}).
\end{proof}

Combining Corollary \ref{cor}  and Proposition \ref{z_est_prop} 
concludes the proof of Theorem \ref{main_theorem}.


 
 








\section*{Acknowledgments}
KC is grateful for the support of
the Engineering and Physical Sciences Research Council: Grant EP/L018802/2 ``Mathematical foundations of metamaterials: homogenisation, dissipation and operator theory''. We are also grateful to Alexander Kiselev and Igor Vel\v{c}i\'{c} for helpful discussions.

\end{document}